\DeclareMathOperator{\F}{F}
 \DeclareMathOperator{\frat}{Frat}
\DeclareMathOperator{\GL}{GL}
\DeclareMathOperator{\End}{End}
\newcommand{\N}{\mathbb N}
\newtheorem{thm}{Theorem}
\newtheorem{cor}[thm]{Corollary}
 \newtheorem{lemma}[thm]{Lemma}
\newtheorem{prop}[thm]{Proposition} 
 \newtheorem{defn}[thm]{Definition}
\newtheorem{question}[]{Question} 
\numberwithin{equation}{section}
\renewcommand{\footnote}{\endnote}
\newcommand{\ignore}[1]{}\makeglossary
\begin{document}
	\bibliographystyle{amsplain}
	\title[Intersection properties in prosolvable groups]{Intersection of maximal subgroups\\ in prosolvable groups}
\author{Iker de las Heras}
	\author{Andrea Lucchini}
	
	\address{Universit\`a degli Studi di Padova\\  Dipartimento di Matematica \lq\lq Tullio Levi-Civita\rq\rq\\ Via Trieste 63, 35121 Padova, Italy, lucchini@math.unipd.it, delasherasiker@gmail.com}

\begin{abstract} Let $H$ be an open subgroup of a profinite group that can be expressed as intersection of maximal subgroups of $G.$ Given a positive real number $\eta,$ we say that $H$ is an $\eta$-intersection 
if  there exists a family of maximal subgroups $M_1,\ldots,M_t$ such that $H=M_1\cap\ldots\cap M_t$ and $|G:M_1|\cdots|G:M_t|\le |G:H|^{\eta}$. We investigate the meaning of this property and its influence on the group structure.

	\end{abstract}
	\maketitle
	
\section{Introduction}
	
Let $G$ be a (topologically) finitely generated profinite group. We write $c_n(G)$ to express the number of (open) subgroups of $G$ of index $n$ which are intersections of maximal subgroups of $G.$ We will say that $c_n(G)$ is polynomially bounded if there exists $\beta$ independent of $n$ such that $c_n(G)\le n^\beta$. Mann asked the following question \cite[Problem 4]{PFG}: \begin{question}\label{quedue}What are the groups for which $c_n(G)$ is polynomially bounded?
	\end{question}  This question is related with the discussion of a conjecture proposed by Mann in the same paper. A profinite group $G$ is positively finitely generated (PFG) if for some finite $k,$ a random $k$-tuple of elements generates $G$ with probability $P(G,k)>0.$ Mann conjectured that if $G$ is PFG, then the Dirichlet series $$P(G,s) = \sum_H \mu(H,G) |G:H|^{-s},$$ defines an analytic function on some right half-plane of $\mathbb C$ and takes the values $P(G,k)$ for (sufficiently large) $k\in \mathbb N$ (here $H$ ranges over the lattice of all open subgroups of $G$ and $\mu$ is the M\"{o}bius function associated to this lattice). To establish this, it is sufficient to verify that $P(G,s)$ converges absolutely in some right half-plane; this is the case if and only if                                 \begin{enumerate}
	\item $|\mu(H,G)|$ is bounded by a polynomial function of $|G:H|;$  \item the number $b_n(G)$ of open subgroups $H$ of index $n$ satisfying $\mu(H,G)\neq 0$ grows at most polynomially in $n.$
\end{enumerate}
As it is noticed in \cite{PFG} p. 447,  if $H$ is an open subgroup of $G$ and $\mu(H,G)\neq 0$, then $H$ is an intersection of maximal subgroups, hence $b_n(G)\leq c_n(G)$ and  condition (2) is satisfied if $c_n(G)$ is polynomially bounded. 
A celebrated result of Mann and Shalev \cite{pmsg} 
states that a profinite group is PFG if and only if it has polynomial
maximal subgroup growth (PMSG). Since $\mu(M,G)=-1$ for any maximal subgroup $M$ of $G,$ it must be $m_n(G)\leq b_n(G)\leq c_n(G)$ (where $m_n(G)$ denotes the number of
maximal subgroups of $G$ with index $n$). In particular, if $c_n(G)$ grows polynomially, then $G$ has PMSG. A natural question that arises from these considerations is the following: \begin{question}Is $c_n(G)$ polynomially bounded when $G$ has PMSG?\end{question}
In \cite{AL}, Mann's conjecture about the absolutely convergence of	 $P(G,s)$ has been proved in the particular case when $G$ is a finitely generated prosolvable groups (recall that every finitely generated prosolvable group $G$ has PMSG \cite[Theorem 10]{PFG}). In that paper it is proved that if $G$ is a finitely generated prosolvable, then $b_n(G)$ is polynomially bounded but is still unknown whether $c_n(G)$ is also polynomially bounded. In this paper, we want to investigate whether the methods employed in \cite{AL} to prove that $b_n(G)$ is polynomially bounded, can be adapted to study the behaviour of $c_n(G).$ The main observation in \cite{AL} is that if $G$ is a prosolvable group and $H$ is an open subgroup of $G$ with $\mu(H,G)\neq 0,$ 
then the maximal subgroups $M_1,\ldots,M_t$ such that 
  $H=M_1\cap\ldots\cap M_t$  can be chosen with the extra property that $|G:M_1|\cdots|G:M_t|=|G:H|.$ On the other hand, what is really useful to bound  the sequence $b_n(G)$ is not the equality
  $|G:M_1|\cdots|G:M_t|=|G:H|$ but that  existence of a constant $\eta$, independent on the choice of $H,$ such that  $|G:M_1|\cdots|G:M_t|\leq|G:H|^\eta.$
This suggests the following definitions:

\begin{defn}\label{defzero}
	Let $G$ be a finite group and let $\eta$ be a positive real number. We say that a maximal intersection $H$ in $G$ is an $\eta$-intersection   if  there exists a family of maximal subgroups $M_1,\ldots,M_t$ such that:
	\begin{enumerate}
		\item $H=M_1\cap\ldots\cap M_t$.
		\item $|G:M_1|\cdots|G:M_t|\le |G:H|^{\eta}$.
	\end{enumerate}
\end{defn}

\begin{defn}\label{defuno}
	Let $G$ be a finite group and let $\eta$ be a positive real number. We say that $G$ has the $\eta$-intersection property  if every maximal intersection in $G$ is an $\eta$-intersection.
\end{defn}

\begin{defn}\label{defdue}
	Let $G$ be a profinite group. We say that $G$ has the bounded intersection property if there exists a positive real number $\eta$ such that  every open subgroup $H$ of $G$ which is an intersection of maximal subgroups of $G$ is an $\eta$-intersection.
\end{defn}

Clearly a profinite group $G$ has the bounded intersection property if and only if there exists a positive real number $\eta$ such that $G/N$ has the $\eta$-intersection property for every open normal subgroup $N$ of $G.$

\

The connection between the previous  definitions 
and Question \ref{quedue} is clarified by the following Proposition.

\begin{prop}\label{propo}Suppose that $G$ is a profinite group  with polynomial
	maximal subgroup growth. If $G$ has the bounded intersection properties, then there exists a constant
		$\beta$ such that $c_n(G) \leq n^\beta.$	\end{prop}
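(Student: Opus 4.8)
The plan is to build an injection from the set of open subgroups of $G$ of index $n$ that are intersections of maximal subgroups into a set of finite tuples of maximal subgroups of $G$ of small total index, and then to estimate the size of that set from PMSG by means of a generating function.

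First I would fix a positive real number $\eta$ witnessing the bounded intersection property of $G$. Given an open subgroup $H\le G$ with $|G:H|=n$ that is an intersection of maximal subgroups, Definitions \ref{defzero} and \ref{defdue} provide maximal subgroups $M_1,\ldots,M_t$ with $H=M_1\cap\ldots\cap M_t$ and $|G:M_1|\cdots|G:M_t|\le n^{\eta}$; I would select one such family for each such $H$, ordered arbitrarily. Since $H=\bigcap_i M_i$ can be recovered from the tuple $(M_1,\ldots,M_t)$, the assignment $H\mapsto (M_1,\ldots,M_t)$ is injective, whence $c_n(G)\le T(n^{\eta})$, where $T(r)$ denotes the number of finite ordered tuples $(M_1,\ldots,M_t)$ of maximal subgroups of $G$ with $|G:M_1|\cdots|G:M_t|\le r$. (In particular each $c_n(G)$ is finite, since by PMSG only finitely many maximal subgroups have index $\le n^{\eta}$.)

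It then remains to bound $T(r)$ by a polynomial in $r$. The naive estimate is too weak: each $|G:M_i|\ge 2$ forces $t\le\log_2 r$, and each $M_i$ has index $\le r$, so $T(r)$ is at most the number of maximal subgroups of index $\le r$ raised to the power $\log_2 r$, which is only quasi-polynomial in $r$. Instead I would sort the tuples by the exact value $m=|G:M_1|\cdots|G:M_t|$ of the product of indices and pass to a Dirichlet series. PMSG yields a constant $c$ with $m_d(G)\le d^{c}$ for all $d$, so $g(s):=\sum_{M\text{ max.}}|G:M|^{-s}=\sum_{d\ge 2}m_d(G)\,d^{-s}\le\sum_{d\ge 2}d^{\,c-s}$ converges for $s>c+1$ and tends to $0$ as $s\to\infty$; fix $s_0$ with $g(s_0)<1$. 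Writing $f(m)$ for the number of ordered tuples of maximal subgroups whose indices have product $m$ (the empty tuple giving $f(1)=1$), one has $\sum_{m\ge 1}f(m)\,m^{-s}=\sum_{t\ge 0}g(s)^{t}=(1-g(s))^{-1}$, which at $s=s_0$ equals a finite constant $C:=(1-g(s_0))^{-1}\ge 1$; since $m^{-s_0}\ge r^{-s_0}$ whenever $m\le r$, this gives $T(r)=\sum_{m\le r}f(m)\le C\,r^{\,s_0}$.

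Combining the two steps, $c_n(G)\le T(n^{\eta})\le C\,n^{\eta s_0}$ for every $n$, and since $C\ge 1$ the choice $\beta=\eta s_0+\log_2 C$ yields $c_n(G)\le n^{\beta}$, as required. I expect the only genuine obstacle to be exactly the failure of the naive count flagged above: picking the $M_i$ one at a time, or growing $H$ along a chain $G>M_1>M_1\cap M_2>\cdots>H$, pays a multiplicative factor per step, and with up to $\sim\log n$ steps this can never beat a quasi-polynomial bound. The point of the generating-function reorganization is precisely that the convergence of $g(s)$ — which is exactly what the PMSG hypothesis delivers — lets one sum over all tuple lengths at once.
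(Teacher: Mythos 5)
Your argument is correct. The overall skeleton coincides with the paper's: fix $\eta$ from the bounded intersection property, attach to each maximal intersection $H$ of index $n$ a family of maximal subgroups with product of indices at most $n^{\eta}$, note that $H$ is recovered as the intersection, and then bound the number of such families using polynomial maximal subgroup growth. Where you genuinely diverge is in the counting step. The paper bounds the number of possible factorizations $n_1\cdots n_t\le n^{\eta}$ directly, quoting the Mattics--Dodd bound on multiplicative partitions to get at most $\tfrac{n^{\eta}(n^{\eta}+1)}{2}$ factorizations, and then multiplies by $n_1^{\alpha}\cdots n_t^{\alpha}\le n^{\eta\alpha}$ choices of maximal subgroups for a fixed factorization. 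You instead absorb both the factorization count and the subgroup count into the single Dirichlet series $g(s)=\sum_{M}|G:M|^{-s}$, choose $s_0$ with $g(s_0)<1$ (which PMSG guarantees), and bound the number $T(r)$ of ordered tuples with product of indices at most $r$ by $(1-g(s_0))^{-1}r^{s_0}$ via $\sum_m f(m)m^{-s_0}=(1-g(s_0))^{-1}$; all series have nonnegative terms and each $m_d(G)$ is finite, so the rearrangements are legitimate, and the final choice $\beta=\eta s_0+\log_2 C$ works for $n\ge 2$ (the case $n=1$ being trivial). Your route is self-contained, avoids the external citation, and is thematically close to the zeta-function framework $P(G,s)$ motivating the paper; the paper's route is more elementary modulo the cited bound and gives an explicit exponent in terms of $\eta$ and $\alpha$ alone, whereas your exponent depends on the chosen abscissa $s_0$. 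Your side remark about why the naive ``one maximal subgroup at a time'' count only gives a quasi-polynomial bound is accurate and is precisely the obstruction both proofs are designed to circumvent.
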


Other two definitions will play a relevant role in our investigation of the bounded intersection property.

\begin{defn}
Let $G$ be a finite group, $V$ an irreducible $G$-module, $F=\End_G(V)$ and $\gamma$ a positive integer. We say that $V$ is a $\gamma$-module if, for every $F$-subspace $W$ of $V,$ there exists an $F$-subspace $W^*$ of $V$ such that:
\begin{enumerate}
	\item $\dim_F(W^*)\leq \gamma.$
	\item $C_G(W)=C_G(W^*)\cap (\cap_{M\in \mathcal M_W}M)$ where $\mathcal M_W$ is the set of the maximal subgroups of $G$ containing $C_G(W).$
\end{enumerate}
\end{defn}

\begin{defn}
	Let $G$ be a profinite group. We say that $G$ has the bounded chief factors property if the exists a positive integer $\gamma$ such that every complemented chief factor of $G$ is a $\gamma$-module.
\end{defn}

Let $M$ be a maximal subgroup of $G$ and denote by $Y_M=\bigcap_{g\in G}M^g$ the normal core of $M$ in $G$ and by $X_M/Y_M$ the socle of the primitive permutation group $G/Y_M$ (in its action on the right cosets of $M/Y_M$ in $G/Y_M$): clearly $X_M/Y_M$ is a chief factor of $G$ and $M/Y_M$ is a complement of $X_M/Y_M$ in $G/Y_M.$ Now assume that $H$ is a maximal intersection in $G$: we will denote by $\mathcal V_H$ the set of the irreducible $G$-module which are isomorphic to $X_M/Y_M$ for some maximal subgroup $M$ of $G$ containing $H.$ We will prove:

\begin{thm}\label{thuno} Let $G$ be a finite solvable group and let $H$ be a maximal intersection in $G$. If  every $V\in \mathcal V_H$ is a $\gamma$-module, then $H$ has the  $(\gamma+1)$-intersection property.
\end{thm}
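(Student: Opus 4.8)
\emph{Overview.} The plan is to induct on $|G|$, following the strategy of \cite{AL} for bounding $b_n(G)$ — where the key point is that a subgroup $H$ with $\mu(H,G)\neq 0$ can be written as an intersection of maximal subgroups whose indices multiply to exactly $|G:H|$ — and to absorb the loss incurred by a \emph{general} maximal intersection into the factor $|G:H|^{\gamma}$ afforded by the $\gamma$-module hypothesis. Throughout, $\mathcal M$ denotes the set of maximal subgroups of $G$ containing $H$, so that $H=\bigcap_{M\in\mathcal M}M$.

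\emph{Reductions.} Let $N$ be a minimal normal subgroup of $G$. If $N\le Y_M$ for every $M\in\mathcal M$ (this happens, for instance, whenever $N\le H$ or $N\le\frat(G)$), then $H/N$ is a maximal intersection of $G/N$, the set $\mathcal V_{H/N}$ consists of the modules of $\mathcal V_H$ regarded as $G/N$-modules — on which the $\gamma$-module property is inherited, since $N$ acts trivially — and $|G/N:H/N|=|G:H|$, $|G/N:M/N|=|G:M|$; so we conclude by induction on $|G|$. Hence we may assume that some $M_0\in\mathcal M$ has $N\not\le Y_{M_0}$; minimality of $N$ then gives $N\cap Y_{M_0}=1$, and $N$ is $G$-isomorphic to $V:=X_{M_0}/Y_{M_0}$, with $M_0$ a complement of $N$ in $G$, $|G:M_0|=|N|=|V|$ and $V\in\mathcal V_H$. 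Put $F=\End_G(V)$, $C=C_G(V)\ (\supseteq N)$, $T=G/C$, so that $T$ acts faithfully and irreducibly on $V$ and $|T|<|G|$. Note also that $|G:H|\ge|G:M_0|=|V|$, whence $|V|^{\gamma}\le|G:H|^{\gamma}$.

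\emph{Separating the maximal subgroups and using the $\gamma$-module property.} The members of $\mathcal M$ of type $V$ (those $M$ with $X_M/Y_M\cong V$) all contain $R_G(V)$, so passing to the crown-based power $G/R_G(V)$ they are described explicitly: each is the stabiliser of a point of an affine action of $G$ on a copy of $V$, determined by a hyperplane of the $V$-crown together with a $1$-cocycle $T\to V$. When one intersects such a family, the part of the intersection inside the $V$-crown is $V\otimes_F(\text{intersection of the corresponding hyperplanes})$, realised by as many of these maximal subgroups as the codimension of that intersection — each of index $|V|$ — which matches the corresponding power of $|V|$ in $|G:H|$ exactly; and the ``quotient'' part is controlled by the intersection of the subgroups $\{t\in T:e(t)=0\}$ as $e$ runs through a family of cocycles. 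The coboundaries $e=d_v$ among these contribute precisely the centralisers $C_T(\langle v\rangle)$, so the coboundary part of the intersection has the form $C_T(W)$ for an $F$-subspace $W\le V$, and — crucially — the whole intersection lies inside $C_{G}(W)$, whence $H\le C_G(W)$. Now we invoke that $V$ is a $\gamma$-module and replace $C_G(W)$ by $C_G(W^{*})\cap\bigcap_{M\in\mathcal M_W}M$ with $\dim_F W^{*}\le\gamma$: the subgroup $C_G(W^{*})$ is realised by at most $\gamma$ further maximal subgroups of type $V$, each of index $|V|$, contributing $\le|V|^{\gamma}\le|G:H|^{\gamma}$; the maximal subgroups in $\mathcal M_W$ all contain $C\supseteq N$ and contain $H$ (because $H\le C_G(W)$), so they are pulled back from maximal subgroups of $T=G/C$, whose associated maximal intersection again has all its $\mathcal V$-modules $\gamma$-modules (by the argument of the first reduction), and are therefore handled economically by the inductive hypothesis applied to $T$; finally, the remaining ``genuinely cohomological'' part of the cocycle intersection is cut out by a few more maximal subgroups of type $V$, and one checks that their total index is paid for by the index they contribute.

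\emph{Assembling, and the obstacle.} The maximal subgroups of $\mathcal M$ not of type $V$ are incorporated by the same inductive device: after verifying that intersecting them with what has already been chosen still reconstructs $H$, one observes that they are pulled back from a proper quotient of $G$ on which the $\gamma$-module hypothesis is inherited, and applies induction. This produces maximal subgroups $M_1,\dots,M_t$ with $H=M_1\cap\cdots\cap M_t$ and $|G:M_1|\cdots|G:M_t|\le|V|^{\gamma}\cdot|G:H|\le|G:H|^{\gamma+1}$. I expect the main obstacle to be precisely this index bookkeeping: one must set up the induction so that \emph{every} maximal subgroup used genuinely contains $H$ (guaranteed by the fact that each centraliser $C_G(W)$ occurring in the argument contains $H$), so that no maximal subgroup is counted twice across the different crowns, and so that the factors of $|G:H|$ are matched one for one with the chosen maximal subgroups apart from the single excess factor $|V|^{\gamma}\le|G:H|^{\gamma}$ coming from $C_G(W^{*})$; controlling the cohomological remainder and ensuring it is paid for by the index it cuts out is the most technical step, and it is exactly this accounting that forces the exponent $\gamma+1$ and nothing larger.
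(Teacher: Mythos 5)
Your overall strategy (induction on $|G|$, splitting off the maximal subgroups of type $V$, describing their intersection via the crown, and using the $\gamma$-module hypothesis to replace a centraliser $C(W)$ by $C(W^*)$ of bounded dimension plus maximal subgroups above $C_G(V)$) is the same as the paper's, but the decisive step is missing: the index accounting, which you yourself flag as ``the main obstacle'', is never carried out, and the accounting you assert is not available. You claim that all maximal subgroups other than the $\le\gamma$ extra ones of index $|V|$ can be chosen so that their indices are ``matched one for one'' with factors of $|G:H|$, giving $\prod_i|G:M_i|\le |V|^{\gamma}|G:H|$. But the part of the intersection formed by the maximal subgroups containing the crown complement (the subgroup the paper calls $\tilde Y$) is only controlled by the inductive hypothesis, which yields $\prod_j|G:Q_j|\le|G:\tilde Y|^{\gamma+1}$, not an exact factorization --- exact factorization is precisely the special feature of subgroups with $\mu(H,G)\neq 0$ from \cite{AL}, and it fails for general maximal intersections; moreover every level of the recursion contributes its own excess $|V_\ell|^{\gamma}$, so a single global excess $|V|^{\gamma}$ cannot come out of your induction. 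What actually closes the argument in the paper is the inequality $|\tilde Y:H|\ge |V|^{t^*}$ (equivalently $|G:\tilde Y|\cdot|V|^{t^*}\le |G:H|$), proved via $\tilde X\tilde Y\supseteq \tilde X D$ and $|D:D\cap\tilde X|=|V|^{t^*}$ using the Gasch\"utz decomposition $C=R\times D$ of Lemma \ref{corona}; combined with $t^*\ge 1$ this gives
\begin{equation*}
|V|^{t^*+\gamma}\,|G:\tilde Y|^{1+\gamma}\le \bigl(|V|^{t^*}|G:\tilde Y|\bigr)^{1+\gamma}\le |G:H|^{1+\gamma},
\end{equation*}
which is exactly the telescoping that absorbs both the inductive exponent and the per-level excess. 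Nothing in your sketch identifies this inequality or any substitute for it, so the bound $|G:H|^{\gamma+1}$ is not established.

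Two further remarks. First, your worry about a ``genuinely cohomological part'' is a non-issue in the solvable setting: if $V$ is a faithful irreducible module for a solvable group $K$, then $H^1(K,V)=0$ (the Fitting subgroup is a $p'$-group acting fixed-point-freely, and inflation-restriction kills $H^1$), so every maximal subgroup supplementing the crown is of the form $WH^v$ and the intersection has the clean shape $U\,C_{H^*}(Z)$ of Theorem \ref{impor} --- there is no cocycle remainder to pay for, and the centraliser-of-a-subspace description you want is exactly Theorems \ref{impor} and \ref{possibile}. Second, your reduction via an arbitrary complemented minimal normal subgroup can be made to work, but the paper's use of Lemma \ref{corona} (choosing a crown with $C=R\times D$) together with Lemma \ref{sotto} is what both splits the maximal subgroups above $H$ cleanly into those in $\mathcal M_V$ and those containing $D$, and produces the normal subgroup $D$ whose index computation yields the factor $|V|^{t^*}$ needed in the displayed inequality; without some such device the ``remaining'' maximal subgroups need not sit above a common normal subgroup in a way that makes the inductive step and the index comparison go through.
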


\begin{cor}
Let $G$ be a finite solvable group. If there exists $\gamma$ such that every complemented chief factor of $G$ is a $\gamma$-module, then $G$ has the $(\gamma+1)$-intersection property.
\end{cor}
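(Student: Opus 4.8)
The plan is to deduce the corollary directly from Theorem~\ref{thuno}: the only thing to check is that the blanket hypothesis ``every complemented chief factor of $G$ is a $\gamma$-module'' supplies, for each maximal intersection $H$ of $G$, exactly the local hypothesis that Theorem~\ref{thuno} requires, after which Definition~\ref{defuno} closes the argument.

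First I would fix an arbitrary maximal intersection $H$ in $G$ and unwind the definition of $\mathcal V_H$: it is the set of irreducible $G$-modules isomorphic to $X_M/Y_M$ as $M$ ranges over the maximal subgroups of $G$ containing $H$. As recorded just before the statement of Theorem~\ref{thuno}, for every maximal subgroup $M$ the section $X_M/Y_M$ is a chief factor of $G$ of which $M/Y_M$ is a complement in $G/Y_M$; hence every member of $\mathcal V_H$ is, up to $G$-isomorphism, a complemented chief factor of $G$. Since the property of being a $\gamma$-module depends only on the $G$-module isomorphism type of $V$ --- its defining condition is phrased purely in terms of the lattice of $F$-subspaces of $V$ and of the associated subgroups $C_G(W)$, all of which are carried over by any $G$-module isomorphism --- the hypothesis of the corollary forces every $V\in\mathcal V_H$ to be a $\gamma$-module.

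Next I would invoke Theorem~\ref{thuno}: $G$ is a finite solvable group, $H$ is a maximal intersection in $G$, and every $V\in\mathcal V_H$ is a $\gamma$-module, so the theorem yields that $H$ is a $(\gamma+1)$-intersection; concretely, there are maximal subgroups $M_1,\dots,M_t$ of $G$ with $H=M_1\cap\dots\cap M_t$ and $|G:M_1|\cdots|G:M_t|\le|G:H|^{\gamma+1}$, which is conditions (1)--(2) of Definition~\ref{defzero} with $\eta=\gamma+1$. As $H$ was an arbitrary maximal intersection, this is precisely the assertion of Definition~\ref{defuno} that $G$ has the $(\gamma+1)$-intersection property.

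There is no genuine obstacle in this deduction: all of the substance is already contained in Theorem~\ref{thuno}, whose proof is where one must carry out the crown-theoretic analysis of the maximal subgroups lying above $H$, organise them by the isomorphism type $V$ of the chief factor $X_M/Y_M$, and use the $\gamma$-module property of each such $V$ to replace the (possibly many) maximal subgroups of a given type by a bounded number of them at the cost of inflating the exponent from $1$ to $\gamma+1$. The one small point worth making explicit here is that the modules $X_M/Y_M$ appearing in $\mathcal V_H$ really are among the complemented chief factors covered by the hypothesis of the corollary, which is immediate once one recalls that $Y_M$ is the normal core of $M$ and that $X_M/Y_M$ is by definition the socle of the primitive quotient $G/Y_M$, complemented there by $M/Y_M$.
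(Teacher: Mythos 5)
Your deduction is correct and is exactly the argument the paper intends: the corollary is stated as an immediate consequence of Theorem~\ref{thuno}, and your observation that each $V\in\mathcal V_H$ is ($G$-isomorphic to) a complemented chief factor, hence a $\gamma$-module by hypothesis, is all that is needed before quoting the theorem for every maximal intersection $H$. No further comment is necessary.
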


We will also prove a converse result:

\begin{thm}\label{due}
	Let $G$ be a finite solvable group. If $G$ has the $\eta$-intersection property, then every complemented chief factor is a $\lfloor \eta\cdot c\rfloor$-module, with $c\backsimeq 3.243,$ the P\'{a}lfy-Wolf constant.
\end{thm}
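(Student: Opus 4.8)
The plan is to fix a complemented chief factor $V=A/B$ of $G$, put $F=\End_G(V)$, take an arbitrary $F$-subspace $W$ of $V$, and produce an $F$-subspace $W^{*}$ with $\dim_F W^{*}\le\lfloor\eta c\rfloor$ realising clause $(2)$ of the definition of a $\gamma$-module. Write $C=C_G(W)$ and $H=\bigcap_{M\in\mathcal M_W}M$. Every $M\in\mathcal M_W$ contains $C\ge C_G(V)$, so all subgroups that occur contain $C_G(V)$; replacing $G$ by $G/C_G(V)$ we may assume $G\le\GL(V)$ acts faithfully and irreducibly on $V$. If $C$ is itself an intersection of maximal subgroups of $G$ then $C=H$, and clause $(2)$ holds with $W^{*}=0$; so from now on assume $C\subsetneq H$. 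In particular $H$ is a maximal intersection different from $C$, so the $\eta$-intersection property provides maximal subgroups $M_1,\dots,M_t$ with $H=M_1\cap\dots\cap M_t$ and $|G:M_1|\cdots|G:M_t|\le|G:H|^{\eta}$, and, discarding redundant ones, each $M_i$ may be taken in $\mathcal M_W$. Since $G$ is soluble, $G/Y_{M_i}$ is primitive of affine type, so $|G:M_i|=|X_{M_i}/Y_{M_i}|$, and the inequality reads $\prod_i|X_{M_i}/Y_{M_i}|\le|G:H|^{\eta}$.

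The aim is to convert this numerical inequality into a bound on the ``linear complexity'' of $C$ inside $H$, via the theory of crowns of finite soluble groups. I would first use Clifford's theorem to pass from the irreducible $G$-module $V$ to a primitive section: $V$ is induced from a primitive module $V_0$ of the stabiliser $G_0$ of a system of imprimitivity, with $G_0$ acting on $V_0$ through a primitive soluble linear group $\overline G_0$, to which the Pálfy--Wolf polynomial bound (with exponent $c$) for the order of a primitive soluble linear group on $V_0$ applies. One then describes $C$ inside $H$ through the $V$-crown of $G$: the $M_i$ with $X_{M_i}/Y_{M_i}\cong V$ translate, under the canonical identification of the relevant section of $H$ with $F$-subspaces of $V$ (and, after the Clifford reduction, of $V_0$), into a collection of linear conditions, while the $M_i$ whose chief factor is not isomorphic to $V$ already contain $C$ and so can be absorbed into $H$ without enlarging $W^{*}$. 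Choosing $W^{*}\subseteq W$ of least dimension with $C=C_G(W^{*})\cap H$ and reading its vectors back as conditions in the primitive section, the number of them required is bounded above by $\log_{|V_0|}$ of a quantity that does not exceed $|G:H|^{\eta}$; feeding in the Pálfy--Wolf bound $|\overline G_0|\le|V_0|^{c}$ then yields $\dim_F W^{*}\le c\eta$, hence $\dim_F W^{*}\le\lfloor\eta c\rfloor$.

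The bulk of the work, and the step I expect to be the main obstacle, is precisely this middle passage: making the reduction to the primitive linear section $\overline G_0$ on $V_0$ compatible with the intersection $H$ and with the point-stabiliser-type subgroup $C=C_G(W)$, and then establishing the ``large-step'' estimate — namely that in the primitive section each vector adjoined to $W^{*}$ divides the order of the image of $H/C$ by a power of $|V_0|$, not merely by a prime — so that the resulting bound on $\dim_F W^{*}$ comes out in terms of $\log_{|V_0|}$ rather than $\log_2$. Once this is in place, combining it with $|\overline G_0|\le|V_0|^{c}$ and with $\prod_i|X_{M_i}/Y_{M_i}|\le|G:H|^{\eta}$ gives the constant $\lfloor\eta c\rfloor$, the two extreme cases ($C=H$, and the contribution of chief factors not isomorphic to $V$) being routine by comparison.
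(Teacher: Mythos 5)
There is a genuine gap, and it sits at the heart of your argument rather than only in the step you flag as the ``main obstacle''. You apply the $\eta$-intersection hypothesis to $H=\bigcap_{M\in\mathcal M_W}M$, but an economical expression $H=M_1\cap\dots\cap M_t$ with $\prod_i|G:M_i|\le|G:H|^{\eta}$ carries no information whatsoever about the position of $C=C_G(W)$ \emph{below} $H$, and that gap between $C$ and $H$ is exactly what clause (2) of the $\gamma$-module definition measures: nothing in your setup produces the vectors that are to span $W^{*}$. You cannot apply the hypothesis to $C$ instead, because $C_G(W)$ is in general not an intersection of maximal subgroups of $G$ (nor of $G/C_G(V)$) --- this failure is the very reason the definition contains the correcting term $\bigcap_{M\in\mathcal M_W}M$. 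The missing idea is to change the group: since $V$ is a complemented chief factor, $\Gamma=V\rtimes H'$ with $H'=G/C_G(V)$ is an epimorphic image of $G$ (through the $V$-crown), and it inherits the $\eta$-intersection property because the maximal subgroups realising an economical intersection all contain the kernel. Inside $\Gamma$, Theorem \ref{possibile} shows that $X=C_{H'}(W)$ \emph{is} an intersection of maximal subgroups, so the hypothesis applies to $X$ itself: in an economical family $X_1,\dots,X_n$ the members not containing $V$ are conjugates $H'^{v_j}$, $j\le m$, and intersecting with $H'$ gives $C_{H'}(W)=C_{H'}(v_1,\dots,v_m)\cap\bigl(\bigcap_{M\in\mathcal M_W}M\bigr)$, so $W^{*}=\langle v_1,\dots,v_m\rangle_F$ works; since each conjugate of $H'$ has index $|V|$, one gets $|V|^{m}\le|\Gamma:X|^{\eta}\le|\Gamma|^{\eta}\le|V|^{c\eta}$ by P\'alfy--Wolf, hence $m\le\lfloor\eta c\rfloor$. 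Your argument never forms $\Gamma$, so the maximal subgroups that encode the linear conditions simply are not available to you.

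Two secondary points. First, the Clifford-theoretic reduction to a primitive module $V_0$ is both unnecessary and problematic: the P\'alfy--Wolf bound is applied to the affine primitive solvable group $\Gamma=V\rtimes H'$ (giving $|\Gamma|\le|V|^{c}$ with $c\simeq3.243$), and an arbitrary $F$-subspace $W$ of $V$ need not be compatible with a system of imprimitivity, so ``reading its vectors back as conditions in the primitive section'' is not well defined, and the intended ``large-step'' estimate --- which you yourself leave unresolved --- would still not connect to $C$ versus $H$ for the reason above. Second, the remark that the $M_i$ ``may be taken in $\mathcal M_W$ after discarding redundant ones'' is automatic, since every $M_i$ contains $H\supseteq C$; it does no work in the proof.
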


In particular we deduce:

\begin{thm}\label{equivalenza}A prosolvable group $G$ has the bounded intersection property if and only if it has the bounded chief factors property.
	\end{thm}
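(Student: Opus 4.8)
The plan is to reduce Theorem~\ref{equivalenza} to the finite case by the usual device of passing between a prosolvable group and its finite continuous quotients, using the remark after Definition~\ref{defdue} that $G$ has the bounded intersection property exactly when there is a single $\eta$ for which $G/N$ has the $\eta$-intersection property for all open normal subgroups $N$ of $G$. Two compatibility facts do the work. \emph{(a)} If $N\trianglelefteq G$ is open and $V$ is a (necessarily finite, since $G$ is prosolvable) complemented chief factor on which $N$ acts trivially, then $V$ is a $\gamma$-module as a $G$-module if and only if it is a $\gamma$-module as a $G/N$-module: for every $F$-subspace $W$ of $V$ one has $N\le C_G(W)$, the group $C_G(W)$ is the full preimage of $C_{G/N}(W)$, and the correspondence theorem gives a bijection between the maximal subgroups of $G$ above $C_G(W)$ and those of $G/N$ above $C_{G/N}(W)$, so conditions (1)--(2) in the definition of $\gamma$-module translate verbatim. \emph{(b)} Every complemented chief factor $A/B$ of $G$ occurs, up to $G$-module isomorphism, as a complemented chief factor of a finite quotient $G/N$: a complement $K/B$ of $A/B$ in $G/B$ has $[G:K]=|A/B|<\infty$, so $K$ is open; setting $N=\Core_G(K)$ and using $N\le K$ one checks $A\cap N\le B$ and $AN\cap K=BN$, whence $AN/BN\cong A/B$ as $G$-modules and $AN/BN$ is a complemented chief factor of $G/N$.

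For the forward implication, assume $G$ has the bounded intersection property with constant $\eta$, and let $A/B$ be any complemented chief factor of $G$. By \emph{(b)} it is isomorphic to a complemented chief factor of some finite solvable quotient $G/N$, and $G/N$ has the $\eta$-intersection property; Theorem~\ref{due} then says that chief factor is a $\lfloor\eta\cdot c\rfloor$-module over $G/N$, hence over $G$ by \emph{(a)}. Thus every complemented chief factor of $G$ is a $\gamma$-module with $\gamma=\lfloor\eta\cdot c\rfloor$, independent of the factor, so $G$ has the bounded chief factors property.

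For the converse, assume $G$ has the bounded chief factors property with constant $\gamma$, and fix an open normal subgroup $N$ of $G$. A complemented chief factor of $G/N$, say $A/B$ with $N\le B\trianglelefteq A\trianglelefteq G$, is by definition a minimal normal subgroup of $G/B$ with a complement there, hence is also a complemented chief factor of $G$; it is therefore a $\gamma$-module over $G$, and over $G/N$ by \emph{(a)} (here $N\le B\le C_G(A/B)$). So every complemented chief factor of the finite solvable group $G/N$ is a $\gamma$-module, and the Corollary to Theorem~\ref{thuno} gives that $G/N$ has the $(\gamma+1)$-intersection property. Since this holds for all open normal $N$ with the same constant $\gamma+1$, the group $G$ has the bounded intersection property.

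I do not expect a genuine obstacle here, since the substance is carried by Theorems~\ref{thuno} and~\ref{due}, which are already available; the only points needing care are the elementary index and intersection computations in \emph{(b)} and the bookkeeping in \emph{(a)}, together with the observation that the constants are preserved only up to the harmless shifts $\gamma\mapsto\gamma+1$ (via the Corollary) and $\eta\mapsto\lfloor\eta\cdot c\rfloor$ (via Theorem~\ref{due}) --- which is precisely why the equivalence is stated for the unquantified ``bounded'' versions of the two properties.
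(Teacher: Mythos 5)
Your proposal is correct and follows essentially the same route as the paper: Theorem \ref{equivalenza} is obtained there as an immediate deduction from the Corollary to Theorem \ref{thuno} (for the ``bounded chief factors $\Rightarrow$ bounded intersection'' direction) and Theorem \ref{due} (for the converse), combined with the remark after Definition \ref{defdue} reducing both properties to the finite quotients $G/N$. Your facts \emph{(a)} and \emph{(b)} just make explicit the routine transfer of complemented chief factors and of the $\gamma$-module condition between $G$ and its finite quotients, which the paper leaves implicit.
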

	
Clearly if $V$ is an irreducible $G$-module and $\dim_{\End_G(V)}V\leq \gamma,$ then $V$ is a $\gamma$-module. So the following corollary is an immediate consequence of Proposition \ref{propo} and Theorem \ref{equivalenza}.
\begin{cor}
	Let $G$ be a finitely generated prosolvable. If there exists $\gamma\in \mathbb N$ such that $\dim_{\End_G(V)}V\leq \gamma$ for every irreducible $G$-module $G$-isomorphic to a complemented chief factor of $G,$ then $c_n(G)$ is polynomially bounded.
\end{cor}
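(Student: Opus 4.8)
The plan is to simply chain together the results already established, since the hypothesis of the corollary is tailored precisely to feed into them. First I would observe that the stated dimension bound forces every complemented chief factor of $G$ to be a $\gamma$-module: if $V$ is such a chief factor and $F=\End_G(V),$ then for any $F$-subspace $W$ of $V$ one may take $W^*=W,$ and condition (1) in the definition of a $\gamma$-module holds because $\dim_F(W^*)=\dim_F(W)\leq\dim_F(V)\leq\gamma,$ while condition (2) becomes a tautology. Hence $G$ has the bounded chief factors property.

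Next I would invoke Theorem \ref{equivalenza}: a prosolvable group has the bounded intersection property if and only if it has the bounded chief factors property. Since $G$ is prosolvable and, by the previous step, has the bounded chief factors property, it follows that $G$ has the bounded intersection property.

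Finally, recall that every finitely generated prosolvable group has polynomial maximal subgroup growth (this is \cite[Theorem 10]{PFG}, already quoted in the introduction). Thus $G$ satisfies both hypotheses of Proposition \ref{propo}, and that proposition yields a constant $\beta$ with $c_n(G)\leq n^\beta$ for all $n$, which is exactly the assertion that $c_n(G)$ is polynomially bounded.

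There is essentially no obstacle here: the only points requiring a word are the elementary reduction in the first step (that an irreducible module of bounded $\End_G$-dimension is automatically a $\gamma$-module) and the verification that the PMSG hypothesis of Proposition \ref{propo} is met, the latter being immediate from finite generation together with prosolvability. All of the genuine work has already been carried out in Proposition \ref{propo} and in Theorems \ref{thuno} and \ref{due}, which together give Theorem \ref{equivalenza}.
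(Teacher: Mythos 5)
Your proposal is correct and follows essentially the same route as the paper, which treats this corollary as an immediate consequence of the observation that a bound on $\dim_{\End_G(V)}V$ makes every complemented chief factor a $\gamma$-module (take $W^*=W$), Theorem \ref{equivalenza}, and Proposition \ref{propo} combined with the fact that finitely generated prosolvable groups have PMSG. Nothing is missing.
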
	
	
An application of the previous result, is the following:
	
\begin{cor}\label{fittingamma}
	Let $G$ be a finitely generated prosolvable group. If the derived subgroup is  pronilpotent, then $c_n(G)$ is polynomially bounded.
\end{cor}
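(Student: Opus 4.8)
The plan is to reduce Corollary \ref{fittingamma} to the previous corollary by showing that, when the derived subgroup $G'$ of a finitely generated prosolvable group $G$ is pronilpotent, there is a uniform bound $\gamma$ on $\dim_{\End_G(V)}V$ as $V$ ranges over the irreducible $G$-modules that are $G$-isomorphic to a complemented chief factor of $G$. By the preceding corollary, such a bound immediately gives that $c_n(G)$ is polynomially bounded, so the whole content of the proof is this boundedness statement. Since the property ``$c_n(G)$ polynomially bounded'' can be checked on finite quotients, and every complemented chief factor of $G$ appears in some finite quotient $G/N$ with $N$ open normal in $G$, it suffices to work with a finite solvable group $G$ whose derived subgroup $G'$ is nilpotent, and to bound $\dim_{\End_G(V)}V$ for each complemented chief factor $V=A/B$ of $G$ in terms only of the number $d$ of generators of $G$ (which is bounded since $G$ is finitely generated).

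Next I would set up the module-theoretic picture. Let $V=A/B$ be a complemented chief factor of $G$; it is an irreducible $G$-module over the prime field $\mathbb F_p$ where $p=|A/B|$'s prime, with $F=\End_G(V)$ a finite field. Because $V$ is complemented, $C_G(V)$ is the kernel of the action and $G/C_G(V)$ acts faithfully and irreducibly on $V$. The key structural input is that $G'$ is nilpotent: this forces $G/C_G(V)$ to be, up to the faithful irreducible action, a group with nilpotent derived subgroup, hence (by Itô's theorem-type considerations, or directly) metabelian-like behaviour on $V$. Concretely, writing $\bar G=G/C_G(V)$, the group $\bar G'$ is a nilpotent normal subgroup acting faithfully and completely reducibly on $V$; its Fitting-type structure means that $\bar G'$, viewed inside $\GL(V)$, lies in a product of a central torus and a bounded-class unipotent-free part. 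The crucial point is that a nilpotent linear group acting completely reducibly is abelian (it lies in a torus up to the scalar field extension), so $\bar G'$ is abelian and, after extending scalars to $F=\End_{\bar G}(V)$, $\bar G'$ embeds in $F^\times$ acting by scalars on $V$ — equivalently $\bar G'$ acts homogeneously and $V$ restricted to $\bar G'$ is a sum of Galois-conjugate one-dimensional $F'$-modules for an extension $F'\supseteq F$.

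From here I would run a Clifford-theoretic dimension count. Since $\bar G/\bar G'$ is abelian, $\bar G$ is metabelian; for a faithful irreducible module $V$ of a metabelian group over its endomorphism field $F$, one has $\dim_F V \le |\bar G/\bar G'|$ — indeed $V$ is induced from a one-dimensional module of a subgroup containing $\bar G'$, so $\dim_F V$ divides $|\bar G:\bar G'|$ (or is bounded by the number of conjugates of a linear character of $\bar G'$, which divides $|\bar G:\bar G'|$). But $\bar G = G/C_G(V)$ is a quotient of $G$, hence $d$-generated, and $\bar G/\bar G'$ is a $d$-generated abelian group acting faithfully... this last step needs care: $|\bar G/\bar G'|$ is not a priori bounded. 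The resolution is that $\bar G/\bar G'$ acts by permuting the (at most $\dim_F V$) homogeneous/Galois components, and being $d$-generated abelian and acting in a way controlled by the extension degree $[F':F]$, one gets $\dim_F V \le [F':F] \le$ (a function of the number of generators). I would cite or reprove the standard fact (this is exactly the kind of estimate behind the Pálfy--Wolf bound) that a $d$-generated solvable group with nilpotent derived subgroup acting faithfully and irreducibly on $V$ satisfies $\dim_{\End_G V}V \le f(d)$ for an explicit $f$, e.g. $f(d)$ linear in $d$. Then $\gamma:=f(d)$ works uniformly.

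The main obstacle I expect is pinning down this uniform bound $f(d)$ cleanly: one must prevent $\bar G/\bar G'$ (and the field extension degree) from growing with $|V|$, using only that $\bar G$ is $d$-generated and $\bar G'$ is nilpotent acting faithfully and completely reducibly — hence abelian — on $V$. The argument is essentially: $\bar G'$ abelian faithful irreducible-isotypic $\Rightarrow$ $\bar G'$ cyclic of order dividing $|F'^\times|$ with $V|_{\bar G'}$ a sum of $[F':F]$ Galois conjugates, so $\dim_F V = [F':F]$; and $\bar G/C_{\bar G}(\bar G')$ embeds in $\mathrm{Gal}(F'/\mathbb F_p)$, while $C_{\bar G}(\bar G')=\bar G'$ (self-centralizing, since $\bar G'$ is the Fitting subgroup of the faithful $\bar G$ and a solvable faithful group has self-centralizing Fitting subgroup)... giving $\dim_F V=[F':F] \le |\mathrm{Gal}(F'/\mathbb F_p)|$, and this last is at most $\log_p|V|$ — which is \emph{not} bounded. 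So the honest route is instead to not ask for $\dim_{\End_G V}V$ bounded but to verify directly that such $V$ is a $\gamma$-module with $\gamma$ bounded: given an $F$-subspace $W$, since $\bar G$ is metabelian one can find $W^*$ of bounded $F$-dimension with $C_G(W)=C_G(W^*)\cap\bigcap_{M\in\mathcal M_W}M$, because the abelian normal subgroup $\bar G'$ is generated by its action on few vectors. Thus the cleanest correct proof invokes Theorem \ref{equivalenza}: it suffices to show $G$ has the bounded chief factors property, i.e. every complemented chief factor is a $\gamma$-module, and for groups with nilpotent derived subgroup this follows from the metabelian structure of $G/C_G(V)$ by a short direct verification — that verification, bounding $\dim_F W^*$ independently of $V$, is the one genuinely new computation and the place where $G'$ pronilpotent is used.
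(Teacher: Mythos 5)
There is a genuine gap: your proposal never actually establishes the bound it needs, and the route you sketch misses the key simplification that makes the paper's proof a two-line affair. Your first strategy (bounding $\dim_{\End_G(V)}V$ by a function of the number of generators) is abandoned by you yourself as failing, and along the way it rests on a false assertion: a nilpotent linear group acting faithfully and completely reducibly need \emph{not} be abelian (extraspecial groups act faithfully and irreducibly), so ``$\bar G'$ nilpotent and completely reducible $\Rightarrow$ abelian $\Rightarrow$ scalars'' is not a valid step. Your fallback --- invoke Theorem \ref{equivalenza} and verify directly that every complemented chief factor is a $\gamma$-module ``by a short direct verification'' using only that $G/C_G(V)$ is metabelian --- is exactly the point where a proof is required and none is given; moreover, at that level of generality it is not at all obvious (a uniform $\gamma$ for faithful irreducible modules of metabelian groups is essentially an instance of the open questions at the end of the paper, and $\dim_{\End}V$ for metabelian groups is genuinely unbounded, as your own $\log_p|V|$ estimate suggests).

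The missing idea is the paper's Lemma \ref{facile}: since complemented chief factors survive in $G/\frat(G)$ (because $R_G(V)\supseteq \frat(G)$), one may assume $\frat(G)=1$; then for a finite solvable group with trivial Frattini subgroup the Fitting subgroup $F(G)$ is abelian, complemented and self-centralizing, and $G'$ nilpotent forces $G'\le F(G)$, so $G=F(G)\rtimes H$ with $H$ abelian. Every nontrivial irreducible module isomorphic to a complemented chief factor is then a minimal normal subgroup $V_i\le F(G)$, centralized by $F(G)$, so the acting group $G/C_G(V_i)$ is \emph{abelian} (not merely metabelian), and Schur's lemma gives $\dim_{\End_G(V_i)}V_i=1$. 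Thus every complemented chief factor is a $1$-module, $G$ has the $2$-intersection property by Theorem \ref{thuno}, and Proposition \ref{propo} (together with PMSG for finitely generated prosolvable groups) yields the polynomial bound on $c_n(G)$. In short, the conclusion is much stronger than the bounded-dimension statement you were trying to prove, and it is obtained by the Frattini reduction you did not make; without it your argument does not close.
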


In particular, we may apply  Corollary \ref{fittingamma} to the class of the finitely generated prosupersolvable groups. However, even if the number of intersections of  maximal subgroups in a finitely generated prosupersolvable group $G$ grows polynomially with respect to the index, it may happen that the amount of such subgroups is really ``big'' comparing with the number of subgroups of $G$ with non-zero M\"obius number. Let us denote by $\tilde \beta_n(G)$ the number of conjugacy classes of subgroups with index at most $n$ and  with non-zero M\"obius number and by $\tilde \gamma_n(G)$ the number of conjugacy classes of subgroups with index at most $n$ that are intersection of maximal subgroups. In Section \ref{super} we will construct a 2-generated prosupersolvable group with the properly that $$\liminf_{n\to \infty}\frac{\tilde \beta_n(G)}{\tilde \gamma_n(G)}=0.$$
We don't know examples of prosolvable groups that don't satisfy the bounded chief factors property. A positive answer to the following intriguing question, will imply that all the prosolvable groups have the bounded chief factors property.
\begin{question} Does there exists a constant $\gamma$ such that, for every finite solvable group $G$, all the irreducible $G$-modules are $\gamma$-modules?
\end{question}

Notice that we can strengthen the definition of $\gamma$-module, setting that an irreducible $G$-module $V$ is a strong $\gamma$-module if, for every $\End_G(V)$-subspace $W$ of $V,$ there exists an $\End_G(V)$-subspace $W^*$ of $V$ such $\dim_{\End_G(V)}(W^*)\leq \gamma$ and $C_G(W)=C_G(W^*)$.

\begin{question} Does there exists a constant $\gamma$ such that, for every finite solvable group $G$, all the irreducible $G$-modules are strongly $\gamma$-modules?
\end{question}
\section{Preliminary results}

In this section we will give the proof of  Proposition \ref{propo} and we will recall the main properties of the crowns of a finite solvable group. Moreover we will start to study the maximal  intersections in a relevant case.

	\begin{proof}[Proof of Proposition \ref{propo}]
	Since  $G$ has PMSG,
		there exists $\alpha$ such that, for each $n \in \mathbb N$, the
		number of maximal subgroups of $G$ with index $n$ is bounded by
		$n^\alpha.$ 
		Now, for $n\neq 1$, we want to count the number of subgroups $H$ with $|G:H|=n$ which are intersections of maximal subgroups. By assumption there exists an integer $\eta$ with the property that, if we fix such an $H\le G$, then there is a family of maximal subgroups $M_1,\ldots, M_t$ such that $H=\cap_{1\le i\le t}M_i$ and $n_1\ldots n_t\le n^{\eta}$, where $n_i=|G:M_i|$. There are at most
		$$1+2+\ldots +n^{\eta}=\frac{n^{\eta}(n^{\eta}+1)}{2}$$
		possible factorizations of positive integers $\le n^{\eta}$ (see \cite{mado}), and for each fixed factorization $n_1\ldots n_t$, there are at most $n_i^{\alpha}$ choices for the maximal subgroup $M_i$ of index $n_i$. Therefore, there are at most $n_1^{\alpha}\ldots n_t^{\alpha}\le n^{\eta\cdot \alpha}$ choices for the family $M_1,\ldots,M_t$, and we conclude that
		$$c_n(G)\le \frac{n^{\eta}(n^{\eta}+1)}{2}n^{\eta\cdot \alpha}.$$
		Obviously, we always can find a constant $\beta$ such that
		$$\frac{n^{\eta}(n^{\eta}+1)}{2}n^{\eta\cdot \alpha}\le n^{\beta},$$
		for any $n\ge 1$, so the proof is complete.		
	\end{proof}

Let $G$ be a finite solvable group and let $M$ be a maximal subgroup of $G$ and denote by $Y_M=\bigcap_{g\in G}M^g$ the normal core of $M$ in $G$ and by $X_M/Y_M$ the socle of the primitive permutation group $G/Y_M$: clearly $X_M/Y_M$ is a chief factor of $G$ and $M/Y_M$ is a complement of $X_M/Y_M$ in $G/Y_M.$ Let $\mathcal M$ be the set of  maximal subgroups of $G,$ let $\mathcal V$ be a set of representatives of the irreducible $G$-modules that are $G$-isomorphic to some chief factor of $G$ having a complement and,
for every $V\in \mathcal V,$  let $\mathcal M_V$ be the set of  maximal subgroups $M$ of $G$ with $X_M/Y_M\cong_G V.$

Recall some results by Gasch\"utz \cite{gaz}. Given $V\in\mathcal V$, let $$R_G(A)  =\bigcap_{M\in\mathcal M_V}M.$$
It turns out that $R_G(A)$ is the smallest normal subgroup contained
in $C_G(A)$ with the property that $C_G(A)/R_G(A)$ is
$G$-isomorphic to a direct product of copies of $A$ and it has a
complement in $G/R_G(A)$. The factor group $C_G(A)/R_G(A)$ is
called the $A$-crown of $G$. The non-negative integer
$\delta_G(A)$ defined by $C_G(A)/R_G(A)\cong_G A^{\delta_G(A)}$ is
called the $A$-rank of $G$ and it coincides with the number of
complemented factors in any chief series of $G$ that are
$G$-isomorphic to $A$ (see for example \cite[Section 1.3]{classes}). In particular $G/R_G(A)\cong A^{\delta_G(A)}\rtimes H,$ with $H\cong G/C_G(A).$

\begin{lemma}{\cite[Lemma 1.3.6]{classes}}\label{corona}
	Let $G$ be a finite solvable group with trivial Frattini subgroup. There exists
	a crown $C/R$ and a non trivial normal subgroup $D$ of $G$ such that $C=R\times D.$
\end{lemma}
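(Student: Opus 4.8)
The plan is to exhibit a crown for which a homogeneous component of $\soc(G)$ serves as the direct factor $D$. We may assume $G\neq 1$. Since $\Phi(G)=1$, the Fitting subgroup $F:=\fit(G)$ equals $\soc(G)$: it is abelian, self-centralizing ($C_G(F)=F$), a semisimple $G$-module, and — being a product of complemented minimal normal subgroups — it has a complement, $G=F\rtimes K$. I would pick a minimal normal subgroup $N\le F$, let $A$ be the irreducible $G$-module with $A\cong_G N$, and set $C:=C_G(A)$, $R:=R_G(A)$; since $F$ is abelian and $N\le F$ we have $F\le C$, and by Gasch\"utz's theory $C/R\cong_G A^{\delta_G(A)}$ with $\delta_G(A)\ge 1$ (as $N$ itself is a complemented chief factor $\cong_G A$). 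Writing $F=F_A\times F'$ with $F_A$ the $A$-homogeneous component of $F$, I claim that $D:=F_A\cong_G A^e$, where $e\ge 1$ is the multiplicity of $A$ in $\soc(G)$, is the subgroup sought, i.e. $C=R\times D$.

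Two of the three required properties are straightforward. Clearly $D=F_A\trianglelefteq G$ and $D\le F\le C$. Next, $D\cap R=1$: otherwise a minimal $G$-submodule $W$ of $F_A\cap R$ would be a minimal normal subgroup of $G$ with $W\cong_G A$ and $W\le R$, and $W$ would be complemented (as $W\not\le\Phi(G)=1$); but $R=R_G(A)$ contains no complemented chief factor $G$-isomorphic to $A$ — indeed every maximal subgroup in $\mathcal M_A$ contains $R$, so $R_{G/R}(A)=1$, whence the $A$-crown of $G/R$ is $C/R$ itself and $\delta_{G/R}(A)=\delta_G(A)$, leaving no room for such a factor below $R$. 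A variant of the same observation shows $F'\le R$ (the image of $F'$ in $C/R\cong A^{\delta_G(A)}$ is at once a quotient of $F'$, whose composition factors avoid $A$, and a homogeneous module of type $A$, hence zero); therefore $FR=F_AR=DR$, and all that remains is $DR=C$, equivalently $FR=C$, equivalently $\delta_G(A)=e$.

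The hard part will be this last equality: when $A$ is the type of a minimal normal subgroup, the $A$-homogeneous part of the socle should already account for every complemented chief factor $\cong_G A$. The plan here is to prove that every chief factor $Y/X$ of $G$ with $Y/X\cong_G A$ occurs inside $\soc(G)$. One reduction is immediate: $Y\le C_G(Y/X)=C_G(A)=C$, so all such factors lie inside $C$, and since $\Phi(C)\le\Phi(G)=1$ the group $C$ again enjoys $\fit(C)=\soc(C)=F$ with $F$ semisimple. The crux is then to exclude a complemented chief factor $Y/X\cong_G A$ with $\soc(G)\le X$: such a factor is a $p$-group lying above $O_p(G)$, is centralized by $C$ (so $Y/X\le Z(C/X)$), and — this is the step I expect to be the main obstacle — combining its being complemented in $G/X$ with the self-centralizing property $C_G(F)=F$ and with the fact that $A$ is realized as a minimal normal subgroup of $G$ should force a contradiction. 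Granting this, $\delta_G(A)=e$, so $DR=C$ and $C=R\times D$ with $D=F_A\cong_G A^e\neq 1$, as required. (When $\delta_G(A)=1$ no such subtlety arises: then $C/R\cong A$, and from $C=N\times Y_{M_0}$ for a maximal subgroup $M_0$ complementing $N$ one reads off $R=Y_{M_0}$ and $D=N$ directly.)
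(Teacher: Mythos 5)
Your argument is organized so that everything hinges on the final claim that, for the type $A$ of an \emph{arbitrarily chosen} minimal normal subgroup $N$, every complemented chief factor $G$-isomorphic to $A$ already lies inside $\soc(G)$ (equivalently $\delta_G(A)=e$, equivalently $DR=C$). This is precisely the step you leave as ``should force a contradiction'', and it is not just unproven: it is false for a general choice of $N$, so the gap cannot be closed along the route you describe. Take $G=C_2\times S_3$, which is solvable with $\frat(G)=1$, and let $N$ be the central subgroup of order $2$, so that $A$ is the trivial module of order $2$. Then $C=C_G(A)=G$, the set $\mathcal M_A$ consists of the three maximal subgroups of index $2$, hence $R=R_G(A)=A_3$ and $C/R\cong A^{2}$, i.e.\ $\delta_G(A)=2$; on the other hand the $A$-homogeneous component of the socle is $F_A=N$, so $e=1$ and $RF_A\cong C_6\neq G=C$. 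Indeed the chief factor $G/\soc(G)\cong_G A$ is a non-Frattini (complemented) factor of type $A$ sitting \emph{above} the socle, exactly the configuration your plan needs to exclude. Worse, $G$ has no normal subgroup of order $4$, so for this crown there is no normal complement $D$ to $R$ in $C$ at all: the lemma genuinely requires a careful selection of the crown, not merely of the candidate $D$ inside a fixed crown.

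The preparatory parts of your proposal are correct (for solvable $G$ with $\frat(G)=1$ one has $\fit(G)=\soc(G)$ complemented and self-centralizing; $D\cap R=1$; the homogeneous components of types different from $A$ lie in $R$; and the case $\delta_G(A)=1$ works as you say), but without a criterion that singles out the right isomorphism type $A$ --- together with a proof that for that type $\delta_G(A)$ equals the multiplicity of $A$ in the socle --- the argument does not establish the statement. This selection is the actual content of the result as proved in \cite[Lemma 1.3.6]{classes}, which the present paper quotes without proof; your proposal replaces it with an unsupported (and, as the example shows, generally false) reduction.
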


\begin{lemma}{\cite[Proposition 11]{crowns}}\label{sotto} Assume that $G$ is a finite solvable group with trivial Frattini subgroup and let $C, R, D$ be as in the statement of Lemma \ref{corona}. If $HD=HR=G,$ then $H=G.$
\end{lemma}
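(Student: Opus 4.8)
The plan is to argue by contradiction. Let $A$ be the irreducible $G$-module whose crown is $C/R$, so that $C=C_G(A)$ and $R=R_G(A)=\bigcap_{M'\in\mathcal M_A}M'$, where $\mathcal M_A$ denotes the set of maximal subgroups $M'$ of $G$ with $X_{M'}/Y_{M'}\cong_G A$. Suppose $H\ne G$ and fix a maximal subgroup $M$ of $G$ with $H\le M$. The whole argument reduces to showing that $M\in\mathcal M_A$: indeed, that gives $R=R_G(A)\le M$, whereas $HR=G$ together with $H\le M$ forces $MR=G$, hence $R\not\le M$, a contradiction. So the task is to identify the chief factor $X_M/Y_M$ with $A$.

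First I would isolate the module-theoretic content of Lemma \ref{corona}. Since $C=R\times D$ with $C/R\cong_G A^{\delta_G(A)}$, the subgroup $D$ is abelian (it is isomorphic to the abelian group $C/R$), hence it is an $\FF_p$-module under the conjugation action of $G$, and the composite $D\hookrightarrow C\to C/R$ is a $G$-module isomorphism $D\cong_G A^{\delta_G(A)}$. Thus $D$ is a semisimple, $A$-isotypic $G$-module; in particular every nonzero $G$-quotient of $D$ is a direct sum of copies of $A$, and every irreducible $G$-section of $D$ is $G$-isomorphic to $A$.

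For the main step, observe that $HD=G$ and $H\le M$ give $MD=G$, so $D\not\le M$; since $D$ is normal in $G$ and $Y_M\le M$ this yields $D\not\le Y_M$, so $DY_M/Y_M$ is a nontrivial normal subgroup of $\overline G:=G/Y_M$. As $\overline G$ is a solvable primitive permutation group, its socle $X_M/Y_M$ is its unique minimal normal subgroup, whence $X_M/Y_M\le DY_M/Y_M$. Now $X_M$ and $DY_M$ are both normal in $G$, so $X_M/Y_M$ is a $G$-submodule of the $G$-module $DY_M/Y_M\cong D/(D\cap Y_M)$, which by the previous paragraph is a direct sum of copies of $A$; being irreducible, $X_M/Y_M$ is therefore $G$-isomorphic to $A$, i.e.\ $M\in\mathcal M_A$. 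This completes the contradiction, so $H=G$.

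The crux of the argument — the one step that is not a formality — is the realisation that any maximal subgroup $M$ containing $H$ is automatically in $\mathcal M_A$; after that the contradiction with $HR=G$ is immediate. The supporting technical points, none of which I expect to cause real trouble, are those where genuine $G$-invariance is needed so that ``submodule'' and ``quotient module'' make sense: that $D$ is an $\FF_p G$-module (forced by $D$ being abelian, since $D\cong C/R$), and that $X_M$ and $DY_M$ are normal in $G$, so that the socle $X_M/Y_M$ really sits inside $DY_M/Y_M$ as a $G$-submodule and the $A$-isotypic structure of $D$ applies. Everything else is the standard behaviour of the normal core, the socle of a solvable primitive group, and the crown subgroup $R_G(A)$ recalled before the statement.
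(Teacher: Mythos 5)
Your proof is correct. Note that the paper itself does not prove this lemma: it is quoted from \cite[Proposition 11]{crowns}, so there is no internal proof to compare against. Your argument is a clean, self-contained derivation in the solvable setting of the present paper: from $HD=G$ you get $D\not\le Y_M$ for any maximal $M\supseteq H$, the unique minimal normal subgroup $X_M/Y_M$ of the primitive solvable quotient $G/Y_M$ then lies in $DY_M/Y_M\cong_G D/(D\cap Y_M)$, and since $D\cong_G C/R\cong_G A^{\delta_G(A)}$ is $A$-homogeneous semisimple this forces $X_M/Y_M\cong_G A$, i.e.\ $M\in\mathcal M_A$, so $R=R_G(A)=\bigcap_{M'\in\mathcal M_A}M'\le M$, contradicting $HR=G$. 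All the supporting points you flag (that $D$ is elementary abelian and a $G$-module, that $X_M$ and $DY_M$ are normal so the socle sits inside $DY_M/Y_M$ as a submodule, and that the paper's definition of $R_G(A)$ is exactly the intersection over $\mathcal M_A$) check out. The only difference from the cited source is scope: Detomi--Lucchini prove the statement for arbitrary finite groups, where the socle of the primitive quotient need not be abelian and the identification of $X_M/Y_M$ with the crown type requires a slightly more careful argument with non-abelian chief factors; your version uses solvability (unique abelian minimal normal subgroup of a primitive solvable group, homogeneity of the module $D$), which is exactly what this paper needs, and buys a shorter, module-theoretic proof.
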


For a fixed $V\in\mathcal V,$ we want to study which subgroups of $G$ can be obtained as intersection of maximal subgroups in $\mathcal M_V.$ Since $R_G(V)\leq M$ for every $M\in \mathcal M_V,$ we are indeed asking which subgroups of the semidirect product  $G/R_G(A)\cong V^{\delta_G(H)}\rtimes G/C_G(V),$ can be obtained as intersection of maximal supplements of the socle $V^{\delta_G(H)}.$
So assume that $H$ is a solvable group acting irreducibly and faithfully on an elementary abelian $p$-group $V$ and, for a positive integer $t,$ consider the semidirect product $G=V^t\rtimes H$, where
 we assume that the action of $H$ is diagonal on $V^t,$ that is, $H$ acts in the same way on each of the direct factors.	The aim of the remaining part of this section is to describe which subgroups of $G$ can be obtained as intersections of maximal subgroups of $G$ supplementing $V^t.$ Notice that   $M$ is a maximal in $G$ supplementing $V^t$ if and only if $M=W H^v$ with $W$ a maximal $H$-submodule of $V^t$ and $v\in V^t$.
 
\begin{lemma}\label{interKM}
	Let $G=V^t\rtimes H$ be a finite solvable group such that $V$ is a faithful irreducible $H$-module. Let $K=W_1X$ and $M=W_2H^{v_2}$,
	where $W_1$ is a proper $H$-submodule of $V^t$, $W_2$ is maximal submodule of $V^t,$  $X\leq H^{v_1}$ for some $v_1\in V^t$ and $v_2\in V^t.$
	\begin{enumerate}[i)]
	\item If $W_1+W_2=V^t,$ then there exists $w_1\!\in\! W_1$ such that $K\cap M=(W_1\cap W_2)X^{w_1}.$
	\item If $W_1\leq W_2,$ then there exists $u\in V$ such that $K\cap M=W_1C_X(u).$
	\end{enumerate}
\end{lemma}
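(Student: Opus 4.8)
\textbf{Proof plan for Lemma \ref{interKM}.}

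The plan is to analyze the intersection $K\cap M$ inside $G=V^t\rtimes H$ by exploiting the conjugacy of complements to the module $V^t$. First I would set up coordinates: write $M=W_2H^{v_2}$ and, since $X\le H^{v_1}$, write $K=W_1X$ with $X\le H^{v_1}$. The key preliminary reduction is to replace $X$ and $M$ by suitable conjugates so that the two complements $H^{v_1}$ and $H^{v_2}$ can be compared. Because any element of $G$ is uniquely $wh$ with $w\in V^t$, $h\in H$, an element of $H^{v_1}$ has the form $h^{v_1}=[h,v_1]h$ where $[h,v_1]\in V^t$; I would use this to translate the condition ``$k\in K\cap M$'' into a system: $k=w'x^{v_1}=w''\,y^{v_2}$ for some $w'\in W_1$, $w''\in W_2$, $x\in X$ (viewed suitably) and $y\in H$. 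Comparing $H$-parts forces the $H$-components to agree, and comparing $V^t$-parts gives an equation of the form $w'+[\,\cdot\,,v_1]=w''+[\,\cdot\,,v_2]$ modulo the relevant submodule.

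For part (i), where $W_1+W_2=V^t$: the point is that modulo $W_1\cap W_2$ the module $V^t$ splits as $W_1/(W_1\cap W_2)\oplus W_2/(W_1\cap W_2)$ as $H$-modules, so the coboundary contribution $[x,v_1-v_2]$ coming from differing complements can be absorbed. Concretely, I would show that there is $w_1\in W_1$ such that $X^{w_1}\le M$ (this is where $W_1+W_2=V^t$ is used: the obstruction to $X\le M$ lives in a quotient killed by $W_1$), and then that $K\cap M=(W_1\cap W_2)X^{w_1}$. The inclusion $\supseteq$ is the content of $X^{w_1}\le M$ together with $W_1\cap W_2\le W_1\cap W_2\le K\cap M$; for $\subseteq$, given $k\in K\cap M$ one writes $k=w\,x$ with $w\in W_1$, $x^{w_1}\in M$-part, and uses that $k\in M=W_2H^{v_2}$ forces $w\in W_2$ as well (after adjusting $x$ by $X^{w_1}$), hence $w\in W_1\cap W_2$.

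For part (ii), where $W_1\le W_2$: now $M$ contains $W_1$ automatically, and $K\cap M$ is $W_1$ times the intersection of $X$ with a conjugate of $H$. The structure is $K\cap M=W_1\cdot(X\cap W_2H^{v_2})$, and $X\cap W_2H^{v_2}$ should reduce to $C_X(u)$ for an appropriate $u$: since $X\le H^{v_1}$, an element $x^{v_1}$ lies in $W_2H^{v_2}$ iff $[x,v_1]-[x,v_2]=[x,v_1-v_2]\in W_2$, i.e. iff $x$ fixes the image $u$ of $v_1-v_2$ in $V^t/W_2\cong V$ (the isomorphism because $W_2$ is a maximal submodule and the action is diagonal). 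So $u\in V$ is exactly this image, and $X\cap W_2H^{v_2}=C_X(u)$ after conjugating back. I would be careful about which conjugate of $H$ the set $C_X(u)$ is computed in, but the diagonal action makes $V^t/W_2$ canonically a single copy of $V$, so $u$ is well-defined.

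The main obstacle I anticipate is the bookkeeping of complement-conjugation: keeping track of which conjugate $H^{v}$ each subgroup lies in, and checking that the ``coboundary'' adjustments (replacing $X$ by $X^{w_1}$ in (i), and identifying the fixed-point condition in (ii)) are consistent — in particular that the $w_1$ produced in (i) can be chosen in $W_1$ and not merely in $V^t$, which is precisely what $W_1+W_2=V^t$ buys. The representation-theoretic input is light (just that $V^t/W_2$ is one copy of $V$ and that $W_1\cap W_2$-quotients split when $W_1+W_2=V^t$); the real work is the careful cocycle computation in the semidirect product.
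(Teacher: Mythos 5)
Your proposal is correct and takes essentially the same route as the paper: in (i) you use $W_1+W_2=V^t$ to produce $w_1\in W_1$ with $X^{w_1}\le M$ (the paper's choice $v_1-v_2=w_2-w_1$) and then the uniqueness of the decomposition $g=wh$, $w\in V^t$, $h$ in a fixed conjugate of $H$, to get the reverse inclusion. In (ii) your Dedekind factorization $K\cap M=W_1(X\cap M)$ plus the observation that $h^{v_1}\in W_2H^{v_2}$ exactly when $h$ fixes the image of $v_1-v_2$ in $V^t/W_2\cong V$ is only a light repackaging of the paper's computation, which instead writes $v_2-v_1=w+u$ with $u$ in an $H$-complement $U$ of $W_2$ and argues elementwise, so nothing needs fixing.
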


\begin{proof}
	Assume $W_1+W_2=V^t$. There exist $w_1\in W_1$ and $w_2\in W_2$ such that $v_1-v_2=w_2-w_1$, or equivalently, $v_1+w_1=v_2+w_2$.  Thus, we have
	$$X^{w_1}\le (H^{v_1})^{w_1}=H^{v_1+w_1}=H^{v_2+w_2},$$
	In particular
	$$\begin{aligned}(W_1\cap W_2)X^{w_1}&\leq W_1X^{w_1}=W_1X=K,\\
	(W_1\cap W_2)X^{w_1}&\leq (W_1\cap W_2)H^{v_2+w_2}\leq W_2H^{v_2+w_2}
	= W_2H^{v_2}=M.
	 \end{aligned}$$
Moreover, let $y_1x=y_2h$ be an element of $K\cap M$ with $y_1\in W_1$, $x\in X^{w_1}$, $y_2\in W_2$ and $h\in H^{v_2+w_2}$. We have $y_1-y_2=hx^{-1}\in V^t\cap H^{v_2+w_2}=1$, so it follows that $y_1=y_2$ and $h=x$. Hence, $K\cap M\le(W_1\cap W_2)X^{w_1}$, and since we proved above the other inclusion, we have the equality, as we wanted.
	
	Assume now that $W_1\le W_2$. We observe that
	$$K\cap M=W_1X\cap W_2H^{v_2}=W_1X\cap W_2X\cap W_2H^{v_2}.$$
	If $W_2X\le W_2H^{v_2}$, then
	$$K\cap M=W_1X\cap W_2X\cap W_2H^{v_2}=W_1X\cap W_2X=W_1X=K$$
	and we are done. So, we may assume $W_2X\not\le W_2H^{v_2}$. This, in particular, implies $W_2H^{v_1}\neq W_2H^{v_2}$ and consequently, $v_2-v_1\not\in W_2$.
	Since $V^t$ is a completely reducible $H$-module, there exists  an $H$-submodule $U$ of $V^t$ such that $V^t=W_2\times U$. Thus, there exists a non-trivial element $u\in U$ such that $v_2-v_1=w+u$ with $w\in W_2$. Hence, $H^{v_2-w}=H^{v_1+u}$ and 
	$$M=W_2H^{v_2}=W_2H^{v_2-w}=W_2(H^{v_1})^u.$$
		Let us show that $K\cap M=W_1C_X(u)$. Note that $C_X(u)=C_X(u)^u\le (H^{v_1})^u$, so it is clear that $W_1C_X(u)\le K\cap M$. In order to prove the other inclusion let $w_1x=w_2h^u$ be an element of $K\cap M$ with $w_1\in W_1$, $w_2\in W_2$, $x\in X$ and $h\in H^{v_1}$. Note that $w_1x=w_2h^u=w_2[u,h^{-1}]h$, and since $H$ normalizes $U$, it follows that $[U,H]\le U$. So, $w_1-w_2-[u,h^{-1}]=hx^{-1}\in V^t\cap H^{v_1}=1$, or in other words, $w_1-w_2=[u,h^{-1}]$ and $h=x$. Moreover, $w_1-w_2=[u,h^{-1}]\in W_2\cap U=0$, so $w_1=w_2$ and $[u,h^{-1}]=[u,h]=[u,x]=0$. Therefore, $x\in C_X(u)$ and $K\cap M\le W_1C_X(u)$, so that $K\cap M =W_1C_X(u)$, and we are done.
\end{proof}


\begin{thm}\label{impor}
	Let $G=V^t\rtimes H$ be a finite solvable group such that $V$ is a faithful irreducible $H$-module. Let $M_1=W_1H^{v_1},\ldots,M_n=W_nH^{v_n}$ be maximal subgroups of $G$ supplementing $V^t$, with $W_i$ a maximal $H$-submodule of $V^t$ and $v_i\in V^t$ for every $i$. Then, we have
	$$\bigcap_{i=1}^nM_i=U C_{H^*}(Z)$$
	where $U=\bigcap_{i=1}^nW_i$, $H^*$ is a conjugate of $H$ and $Z$ is an $\End_{H}(V)$-subspace of $V$.
\end{thm}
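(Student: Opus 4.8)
The plan is to prove the statement by induction on $n$, using Lemma~\ref{interKM} as the engine at each step. The inductive hypothesis will be that $K := \bigcap_{i=1}^{n-1} M_i = U' C_{H'}(Z')$, where $U' = \bigcap_{i=1}^{n-1} W_i$ is an $H$-submodule of $V^t$, $H'$ is a conjugate of $H$ (say $H' \le H^{v'}$ for some $v' \in V^t$), and $Z'$ is an $\End_H(V)$-subspace of $V$; here I read $C_{H'}(Z')$ as the subgroup of $H'$ centralizing $Z'$ under the identification of $H'$ with $H$, and I will need to track that this is compatible with the hypotheses of Lemma~\ref{interKM} — namely that $K$ has the shape $W_1 X$ with $W_1 = U'$ a proper submodule and $X = C_{H'}(Z') \le H^{v'}$. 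The base case $n=1$ is immediate with $U = W_1$, $H^* = H^{v_1}$, and $Z = 0$ (so $C_{H^{v_1}}(0) = H^{v_1}$), or $n = 0$ giving $K = G = V^t \rtimes H$ with $Z = 0$.

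For the inductive step I would intersect $K = U' C_{H'}(Z')$ with $M_n = W_n H^{v_n}$ and split into the two cases of Lemma~\ref{interKM} according to how $U'$ sits relative to $W_n$. Since $W_n$ is a \emph{maximal} $H$-submodule of $V^t$, either $U' + W_n = V^t$ or $U' \le W_n$. In the first case, part (i) of Lemma~\ref{interKM} gives $K \cap M_n = (U' \cap W_n) C_{H'}(Z')^{w}$ for some $w \in U'$; now $U' \cap W_n = \bigcap_{i=1}^n W_i = U$, and $C_{H'}(Z')^{w}$ is a conjugate of $C_{H'}(Z')$, hence of the form $C_{H^*}(Z)$ with $H^* = (H')^w$ (still a conjugate of $H$, via $H^* \le H^{v'+w}$) and $Z$ the image of $Z'$ under the corresponding identification — still an $\End_H(V)$-subspace since conjugation by an element of $V^t$ does not disturb the $\End_H(V)$-module structure on $V$. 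In the second case $U' \le W_n$, part (ii) of Lemma~\ref{interKM} gives $K \cap M_n = U' \, C_X(u)$ for some $u \in V$, where $X = C_{H'}(Z')$; but $C_X(u) = C_{H'}(Z') \cap C_{H'}(u) = C_{H'}(Z' + \langle u \rangle_F)$ where $F = \End_H(V)$ — here one uses that $C_{H'}(Z')$ already centralizes the $F$-span of $Z'$, so adjoining $u$ and closing under $F$ changes nothing for those elements — and $U = U' \cap W_n = U'$ since $U' \le W_n$. So $K \cap M_n = U \, C_{H'}(Z)$ with $Z = Z' + Fu$, an $\End_H(V)$-subspace, and $H^* = H'$.

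The main obstacle I anticipate is bookkeeping rather than a genuine mathematical difficulty: one must check carefully that the output $U' C_{H'}(Z')$ of each step genuinely satisfies the structural hypotheses required to feed it back into Lemma~\ref{interKM} as a "$K = W_1 X$ with $X \le H^{v_1}$" — in particular that $U'$ remains a \emph{proper} submodule (which holds as long as $K \ne G$; if $K = G$ at some stage then $M_1 = \cdots = M_{n-1} = G$, contradicting maximality, so this never occurs), and that $C_{H'}(Z') \le H^{v'}$ literally, so that part (ii)'s computation $C_X(u)$ applies. A secondary point to verify is that in case (ii) the element $u$ produced by Lemma~\ref{interKM} lies in $V$ rather than in $V^t$; re-examining the proof of that lemma, $u$ is built from a complement $U$ to $W_n$ in $V^t$, and since $W_n$ has codimension $\dim_F V$ in $V^t$ this complement is isomorphic to $V$, so identifying it with $V$ is exactly what lets us regard $Z = Z' + Fu$ as a subspace of the single copy $V$ — this identification is consistent across steps because the diagonal action makes all such complements $H$-isomorphic to $V$. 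Finally one notes that conjugating $Z$ by elements of $V^t$ (as happens in case (i)) does not alter its dimension or its $F$-subspace structure, so the conclusion "$Z$ is an $\End_H(V)$-subspace of $V$" is preserved throughout the induction.
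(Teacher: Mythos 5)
Your proposal is correct and takes essentially the same route as the paper: induction on the number of maximal subgroups, with Lemma \ref{interKM} providing each step and the span construction $Z'+\End_H(V)u$ absorbing the new centralizer condition. The only difference is organizational: the paper first reorders the $M_i$ so that all steps with $U'+W_n=V^t$ precede those with $U'\le W_n$ (so $Z$ only starts to grow after the submodule part has stabilized), whereas you interleave the two cases and check, correctly, that conjugating $C_{H'}(Z')$ by an element of $V^t$ returns a centralizer of the same subspace $Z'$ in a conjugate of $H$.
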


\begin{proof}
 Set $U_j:=\bigcap_{1\le i\le j} W_i$. Reordering the maximal subgroups, we may assume
$$V^t> U_1> U_2>\ldots> U_{t^*}=\ldots=U_n=U$$
for a suitable $1\le t^*\le n$. First we prove, by induction on $j$, that for every $j\leq t^*$ there exists $w_j\in V^t$ such that 
$$M_1\cap \dots \cap M_j=U_jH^{w_j}.$$ This is clear if $j=1.$ Assume $1<j\leq t^*.$ By induction $K:=M_1\cap \dots \cap M_{j-1}=U_{j-1} H^{w_{j-1}}$ for some $w_{j-1}\in V^t.$ Since $U_j=U_{j-1}\cap W_j<U_{j-1},$ we have $U_{j-1}+W_j=V^t$ and therefore, by Lemma \ref{interKM},
$$M_1\cap \dots \cap M_{j}=K\cap M_j=U_jH^{v_j}$$ for some $w_j\in V^t.$ In particular 
$$M_1\cap\dots\cap M_{t^*}=UH^*$$
with $H^*$ a suitable conjugate of $H$ and $U\cong_H V^{t-t^*}.$ Now consider $0\leq i\leq n-t^*.$ We prove, again by induction, that, for every $0\leq i\leq n-t^*,$ there exists an $\End_H(V)$-subspace $Z_i$ of $V$ with
	$$M_1\cap \dots \cap M_{t^*}\cap \dots\cap M_{t^*+i}=UC_{H^*}(Z_i).$$
When $i=0,$ we just take $Z_0=\{0\}.$ If $i>0,$ then by induction there exists $Z_{i-1}$ such that
$K=M_1\cap \dots \cap M_{t^*}\cap \dots\cap M_{t^*+i-1}=U C_{H^*}(Z_{i-1}).$ Since $U\leq W_{t^*+i},$ it follows from Lemma \ref{interKM} that there exists $z_i\in V$ such that
$$M_1\cap \dots\cap M_{t^*+i}=K \cap M_{t^*+i}=MC_{C_{H^*}(Z_{i-1})}(z_i)=MC_{H^*}(Z_i),$$ being 
$Z_i$ the $\End_H(V)$-subspace of $V$ spanned by $Z_{i-1}$ and $z_i.$
\end{proof}

\begin{thm}\label{possibile}
	Let $G=V^t\rtimes H$ be a finite solvable group such that $V$ is a faithful irreducible $H$-module. Assume that $U$ is an intersection of maximal $H$-submodules of $V^t$,  $H^*$ is a conjugate of $H$ and $Z$ is an $\End_{H}(V)$-subspace of $V.$ If $V^t/U\cong_H V^{t^*}$ and $d=\dim_{\End_{H}(V)}Z,$ then $UC_{H^*}(Z)$ can be obtained as intersection of $t^*+d$ maximal subgroups of $G$ supplementing $V^t.$
\end{thm}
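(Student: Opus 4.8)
The plan is to reverse‑engineer Theorem~\ref{impor}: first intersect $t^*$ maximal subgroups that cut the normal subgroup $V^t$ down to $U$, and then intersect $d=\dim_{\End_H(V)}Z$ further maximal subgroups, each of which shaves off exactly one basis direction of $Z$ from the ``$H$‑part''. Throughout I write elements of $G$ as pairs $(a,h)$ with $a\in V^t$, $h\in H$, so that (for $H$ in standard position) $H=\{(0,h):h\in H\}$, $V^t=\{(a,1):a\in V^t\}$, and $H^v=\{((h-1)v,h):h\in H\}$ for $v\in V^t$. Writing $H^*=H^v$, conjugation by $v$ fixes $U$ (normal in $G$), sends $C_H(Z)$ to $C_{H^v}(Z)=C_{H^*}(Z)$, and permutes the maximal subgroups supplementing $V^t$; so it suffices to realise $UC_H(Z)$ as an intersection of $t^*+d$ such subgroups for $H$ in standard position. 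The case $d=0$ (so $Z=0$) will follow from the first batch alone, and when $d\ge 1$ we automatically have $t^*\ge 1$, since $U$, being an intersection of proper submodules, satisfies $U\subsetneq V^t$.

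\emph{First batch.} Since $V^t/U\cong_H V^{t^*}$ is a nonzero semisimple module of length $t^*$, pulling back the $t^*$ coordinate ``hyperplanes'' of $V^{t^*}$ produces maximal $H$‑submodules $W_1,\dots,W_{t^*}$ of $V^t$ with $U\le W_i$ for all $i$ and $\bigcap_iW_i=U$. Each $W_i$ is normal in $G$ with $V^t/W_i\cong_H V$ irreducible, so $G/W_i\cong V\rtimes H$, in which $H$ is maximal: any subgroup $L$ with $H\le L\le V\rtimes H$ equals $(V\cap L)\rtimes H$, and $V\cap L$ is an $H$‑submodule, hence $\{0\}$ or $V$. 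Thus $W_iH$ is a maximal subgroup of $G$ supplementing $V^t$, and since the elements common to all the $W_iH$ are exactly the pairs $(w,h)$ with $w\in\bigcap_iW_i$ and $h\in H$, we get $\bigcap_{i=1}^{t^*}W_iH=UH$.

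\emph{Second batch.} Put $F=\End_H(V)$, and fix an $F$‑basis $z_1,\dots,z_d$ of $Z$, a maximal $H$‑submodule $W'$ of $V^t$ with $U\le W'$, an $H$‑module complement $U'$ with $V^t=W'\oplus U'$, and (using $U'\cong_H V$) an $H$‑isomorphism $\phi\colon V\to U'$; set $u_j:=\phi(z_j)$ and $M'_j:=W'H^{u_j}$, again a maximal subgroup supplementing $V^t$. Writing $Z_j=\langle z_1,\dots,z_j\rangle_F$, I claim by induction on $j$ that $W_1H\cap\dots\cap W_{t^*}H\cap M'_1\cap\dots\cap M'_j=UC_H(Z_j)$, the base case $j=0$ being $\bigcap_iW_iH=UH=UC_H(Z_0)$. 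For the inductive step, with $K=UC_H(Z_{j-1})$, a pair $(a,h)$ lies in $K\cap M'_j$ iff $a\in U$, $h\in C_H(Z_{j-1})$ and $a=w+(h-1)u_j$ for some $w\in W'$; since $a\in U\le W'$ this forces $(h-1)u_j\in W'\cap U'=0$, i.e.\ $h\in C_H(u_j)=C_H(z_j)$ (here one uses that $\phi$ is an $H$‑map), and then $w=a$. Hence $K\cap M'_j=U\bigl(C_H(Z_{j-1})\cap C_H(z_j)\bigr)=UC_H(Z_j)$, using that $H$ acts $F$‑linearly on $V$. Taking $j=d$ gives $UC_H(Z_d)=UC_H(Z)$ as the intersection of the $t^*+d$ maximal subgroups $W_1H,\dots,W_{t^*}H,M'_1,\dots,M'_d$ of $G$ supplementing $V^t$, as required. (Instead of computing $K\cap M'_j$ directly one can invoke Lemma~\ref{interKM}(ii) with $W_1=U$, $X=C_H(Z_{j-1})$, $W_2=W'$, $v_2=u_j$, provided one checks inside its proof that, taking the complement there to be $U'$, the element it produces is precisely $u_j$.)

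The step I expect to be the main obstacle is the second batch: one must choose $W'$, $U'$, $\phi$ and the translating vectors $u_j$ so that intersecting with $M'_j$ trims the ``$H$‑part'' by exactly the line $Fz_j$ --- no more and no less --- and in particular so that $C_H(u_j)$ is correctly identified with $C_H(z_j)$. This is precisely where the diagonal action of $H$ on $V^t$ and the $H$‑isomorphism $U'\cong V$ enter, and it is why Lemma~\ref{interKM}(ii) must be used with its proof in hand rather than as a black box.
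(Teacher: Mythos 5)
Your construction is essentially the paper's own proof: you pick $t^*$ maximal submodules $W_i$ with $\bigcap_i W_i=U$ to get $UH^*$ (paper: the $X_i=W_iH^*$), then one maximal submodule $W'$ containing $U$ with complement $U'\cong_H V$ and an $H$-isomorphism $\phi$, translating $H^*$ by $\phi(z_j)$ (paper: the $Y_j=A(H^*)^{b_j}$ with $b_j=z_j^\phi$). The only difference is presentational: you normalise $H^*=H$ and verify the intersections by direct computation instead of citing Lemma~\ref{interKM}, which in fact handles more carefully the point (glossed over in the paper) that the centralizer produced in the second batch is exactly $C_{H^*}(z_j)$ rather than $C_{H^*}(u)$ for some unspecified $u$.
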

\begin{proof}
By assumption, there exists $t^*$ maximal $H$-submodules $W_1,\dots,W_{t^*}$ of $V^t$ such that $U=W_1\cap \dots \cap W_{t^*}.$ Let $z_1,\dots,z_d$ be an $\End_H(V)$-basis of $Z.$ Let $A$ be a maximal $H$-submodule of $V^t$ containing $U:$ there exists  
an $H$-submodule $B$ of $V^t$ such that $V^t= A\times B.$
It must be $B\cong_H V,$ so let $\phi: V\to B$ be an $H$-isomorphism  and for every $1\leq i \leq d,$ set $b_i:=z_i^\phi.$ For every $1\leq i\leq t^*$, let $X_i:=W_iH^*$ and, for every $1\leq j\leq d$, let $Y_j:=A (H^*)^{b_j}.$ It follows from Lemma \ref{interKM}, that 
$$(X_1\cap\dots\cap X_{t^*})\cap (Y_1\cap\dots\cap Y_d)=(UH^*)\cap (A C_{H^*}(Z))=UC_{H^*}(Z).\quad \qedhere$$
\end{proof}

\section{Bounded intersection and bounded chief factors properties}

In this section we will prove that in the class of prosolvable groups the bounded intersection property and the bounded chief factors property are equivalent.

\begin{proof}[Proof of Theorem \ref{thuno}]
	We proceed by induction on $|G|$. We may assume $\frat(G)=1$. By Lemma \ref{corona}, we can assume there exists an irreducible $G$-module $V$ such that $C=R\times D$ with $1\neq D\cong_GV^t$, where $C=C_G(V)$, $R=R_G(V)$ and $t=\delta_G(V)$. Assume that $H$ is a maximal intersection in $G$ and that every $U\in\mathcal V_H$ is a $\gamma$-module. We want to prove that there exists a family $M_1,\dots,M_n$ of maximal subgroups of $G$ such that $H=M_1\cap \cdots \cap M_n$ and $|G:M_1|\cdots |G:M_n|\leq |G:H|^{\gamma+1}.$
	
	First assume $D\le H.$ If $M\in\mathcal M_V,$ then $MD=G,$ hence $H\not\le M$ and consequently $V\notin\mathcal V_H.$ We may then work module $D$ and conclude by induction.
	Hence we may assume $D\not\le H$. We can write
	$$H=X_1\cap\ldots\cap X_{\rho}\cap Y_1\cap\ldots\cap Y_{\sigma},$$
	where  $X_1,\dots,X_\rho$ are maximal subgroups not containing $D$ 
	and $Y_1,\dots,Y_\sigma$ are maximal subgroups containing $D.$
	Notice also that $\{X_1,\dots,X_\rho\}\subseteq \mathcal M_V,$ and consequently
$V\in \mathcal V_H.$		We define 
$$X:=X_1\cap\ldots\cap X_{\rho} \text{ and } Y:=Y_1\cap\ldots\cap Y_{\sigma}.$$
	
	By Lemma \ref{sotto}, 
	$R\le X_i$ for every $i$, in particular  $R\le X$. By the properties of the crowns, there exists $K\le G$ such that
	$$G/R=C/R\rtimes K/R\cong V^t\rtimes K/R,$$ 
	where $V$ can be seen as an irreducible  $K$-module with $C_K(V)=R.$ Note that $X/R$ is an intersection of maximal subgroups of $G/R$ supplementing $C/R\cong_{G}V^t$, so we may apply Theorem \ref{impor}: there exists an intersection $T/R$ of maximal $G$-submodules of $C/R,$ a conjugate $K^*$ of $K$ in $G$ and an $\End_G(V)$-subspace $Z$ of $V$ such that
	$$X/R=T/R \rtimes C_{K^*}(Z)/R.$$ Since $V\in \mathcal V_H,$ there is an $\End_G(V)$-subspace $Z^*$ of $Z$ such that $\dim_{\End_G(V)} Z^* \leq \gamma$ and
	$$C_{K^*}(Z)=C_{K^*}(Z^*)\cap (\cap_{M\in \mathcal M_Z}M)$$ where $\mathcal M_Z$ is the set of the maximal subgroups of $K^*$ containing $C_{K^*}(Z).$ We have $C/T\cong_G V^{t^*}$ for some positive integer $t^*$ and, by Theorem \ref{possibile}, there exists $\alpha\leq t^*+\gamma$ and a family $\tilde X_1,\dots,\tilde X_\alpha$ of maximal subgroups
	of $G$ containing $R$ and supplementing $C$ such that
	$$\tilde X_1/R\cap \dots \cap \tilde X_\alpha/R= T/R \rtimes C_{K^*}(Z^*)/R. \ \text { i.e. }\ \tilde X_1\cap \dots \cap \tilde X_\alpha=TC_{K^*}(Z^*).$$
	Assume now that $\mathcal M_Z=\{M_1,\dots,M_\beta\}:$ for every $1\leq i\leq \beta,$ there exists a maximal subgroup $\tilde Y_i$ containing $C$ such that $\tilde Y_i/R = C/R \rtimes M_i/R.$ Notice that 
			$$\left(\cap_{1\leq i \leq \alpha} {\tilde X_i}\right)\cap \left(\cap_{1\leq j \leq \beta} {\tilde Y_j}\right) =\left(T C_{K^*}(Z^*)\right)\cap \left(C  \left(\cap_{1\leq j\leq \beta} M_j\right)\right)= T C_{K^*}(Z)=X
			,$$ hence
		$$\tilde X_1\cap \dots \cap \tilde X_\alpha \cap \tilde Y_1\cap \dots \cap \tilde Y_\beta =X.$$
		Let
			$$\tilde X=\tilde X_1\cap \dots \cap \tilde X_\alpha, \quad \tilde Y = \tilde Y_1\cap \dots \cap \tilde Y_\beta \cap Y_1\cap \dots\cap Y_\sigma.$$
			We have $\tilde X \cap \tilde Y = X \cap Y = H.$ Notice that $\tilde Y$ is an intersection of maximal subgroups of $G$ containing $D$, so by induction there exists $\tau$ maximal subgroups $Q_1,\dots,Q_\tau$ of $G$ such that
			$$\tilde Y=Q_1\cap\dots\cap Q_\tau \quad \text { and } \quad \prod_{1\leq j\leq \tau}|G:Q_j|\leq |G:\tilde Y|^{\gamma+1}.$$
		Define $D^*=D\cap X$. Thus, by the Dedekind Law, we have $T=DR\cap \tilde X=(D\cap \tilde X)R=D^*R$, and note that
			 $$V^{t-t^*}\cong_G T/R \cong_G D^*R/R\cong_G D^*/(D^*\cap R)\cong_G D^*.$$
Moreover  $\tilde Y\ge D$ implies $\tilde X\tilde Y\geq \tilde XD$, and so $|\tilde X\tilde Y|\ge |\tilde XD|$. 
 Hence,
	$$|\tilde X||\tilde Y|/|\tilde X\cap \tilde Y|\geq |\tilde X||D|/|\tilde X\cap D|,$$
	so
	$$|\tilde Y:\tilde X\cap \tilde Y|\ge |D:\tilde X\cap D|=|D:D^*|=|V|^{t^*}.$$ We have
	$$H=\tilde X_1\cap \dots \cap \tilde X_\alpha\cap Q_1\cap \dots \cap Q_\tau$$
and, since  $|G:X_i|=|V|$ for every $i\in \{1,\dots,\alpha\}$, $\alpha \leq t^*+\gamma$ and $t^*\geq 1,$
	\begin{equation*}
	\begin{split}
	\prod_{i=1}^{\alpha}|G:\tilde X_i|\prod_{j=1}^{\tau}|G:Q_j|&\le |V|^{t^*+\gamma}|G:\tilde Y|^{1+\gamma}\le (|G:\tilde Y||V|^{t^*})^{1+\gamma}\\
	&\le (|G:\tilde Y||\tilde Y:\tilde X\cap \tilde Y|)^{1+\gamma}\le |G:\tilde X\cap \tilde Y|^{1+\gamma}=|G:H|^{1+\gamma},
	\end{split}
	\end{equation*}
	and\ the theorem follows.
\end{proof}

\begin{proof}[Proof of Theorem \ref{due}]
Assume that $V$ is an irreducible $G$-module $G$-isomorphic to a non-Frattini chief factor of $G$ and let $H:=G/C_G(V).$ The semidirect product $\Gamma  =V\rtimes H$ is an epimorphic image of $G$, so it has the $\eta$-intersection property. Let $F=\End_G(V)$ and let $W$ be an $F$-subspace of $V.$ By Theorem \ref{possibile}, $X=C_H(W)$ is an intersection of maximal subgroups of $\Gamma,$ so there exists $n$ maximal subgroups $X_1,\dots,X_n$ of $\Gamma$ such that $X=X_1\cap\dots\cap X_n$ and $|\Gamma:X_1|\cdots |\Gamma:X_n|\leq |\Gamma:X|^\eta.$
Let $\overline X$ be the intersection of the maximal subgroups of $H$ containing $X.$ We may assume that $X_i$ contains $V$ if and only if $i>m:$ this means that for every $j\leq m$ there exists $v_j\in V$ such that $X_j=H^{v_j}$ while $X_{m+1}\cap\dots \cap X_n\geq V\rtimes \overline X.$ In particular 
$$C_H(v_1,\dots,v_m)\cap \overline X = H\cap H^{v_1}\cap \dots \cap H^{v_m}\cap  (V\rtimes \overline X )\subseteq X,$$
which implies that if $W^*$ is the $F$-subspace of $V$ spanned by $v_1,\dots,v_m$, then $C_H(W)=C_H(W^*)\cap (\cap_{M\in \mathcal M_W}M).$ By \cite{palfy} and \cite{wolf}, $|\Gamma|\leq |V|^c,$ so
$$|V|^m=\prod_{1\leq i \leq m}|\Gamma:X_i|\leq \prod_{1\leq i \leq n}|\Gamma:X_i|
\leq |\Gamma:X|^\eta \leq |\Gamma|^\eta \leq |V|^{c\cdot \eta}.$$ Hence $\dim_F (W^*)\leq m\leq \lfloor \eta\cdot c\rfloor.$
\end{proof} 

It remains to prove Corollary \ref{fittingamma}. For this purpose, we need the following observation:

\begin{lemma}\label{facile}Assume that $G$ is a finite solvable group with nilpotent derived subgroup. If $V$ is an irreducible $G$-module $G$-isomorphic to a non-Frattini chief factor of $G,$ then $\dim_{\End_GV}V=1.$
\end{lemma}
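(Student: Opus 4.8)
The plan is to reduce to a group $G$ with $\frat(G)=1$, since replacing $G$ by $G/\frat(G)$ changes neither $\End_G(V)$ nor the hypothesis that $G'$ is nilpotent (the derived subgroup of a quotient is the image of the derived subgroup, hence still nilpotent), and the non-Frattini chief factors of $G$ and of $G/\frat(G)$ carry the same $G$-module structures. So assume $\frat(G)=1$. Then the Fitting subgroup $F=\fit(G)$ is a direct product of its Sylow subgroups, each of which is elementary abelian and complemented, and $C_G(F)=F$; moreover $G'\le F$ since $G'$ is nilpotent and normal. Write $C=C_G(V)$ and $H=G/C$; then $H$ acts faithfully and irreducibly on $V$, and $V$ is (a section isomorphic to) a chief factor sitting inside $F$, so $V$ is one of the simple summands showing up in $F/\frat(G)$-type data. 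The key point is that $F$ acts on $V$ through an abelian quotient, so $F$ acts on $V$ as a group of scalars.

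The central step is therefore: \emph{the image of $F$ in $\gl(V)$ lies in $\End_G(V)^\times$, acting by scalars, and this image is all of $\End_G(V)^\times$ when $V$ is faithful and irreducible over $H$.} Concretely, $V$ is an irreducible module for $H=G/C$; the normal subgroup $FC/C$ of $H$ is abelian, so by Clifford theory it acts on the irreducible module $V$ homogeneously, i.e. as a sum of copies of a single one-dimensional character after extending scalars — equivalently, since it is normal and abelian and $V$ is irreducible for $H$, the subalgebra it generates inside $\End_{\FF_p}(V)$ is a field contained in $\End_H(V) = F$ (here I abuse $F$ for $\End_G(V)$; the paper writes $F=\End_G(V)$). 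Actually the cleanest formulation: let $A$ be the (commutative) $\FF_p$-subalgebra of $\End_{\FF_p}(V)$ generated by the image of $FC/C$. Since $FC/C \trianglelefteq H$ and $V$ is $H$-irreducible, $A$ is $H$-stable under conjugation, and $V$ is a faithful irreducible module over the commutative ring $A$, forcing $A$ to be a field; every element of $H$ centralizes $A$ (because conjugation by $H$ permutes the generators but $A$ is commutative and the action is by an abelian group — more carefully: $A \subseteq \End_H(V) = \End_G(V)$ by Schur together with the fact that $A$ is a field on which $H$ acts by field automorphisms, and an element of $H$ acting trivially on the scalars fixes $A$ pointwise, while one acting nontrivially would contradict faithfulness combined with $FC$ being large enough — this is the delicate bit). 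The upshot is $A \subseteq \End_G(V)$.

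Now to finish: $\End_G(V) = \End_H(V)$ is a finite field $F$, and it is a faithful irreducible $F$-module, hence one-dimensional, provided we know $\End_H(V)$ is \emph{generated} by scalars coming from $F\cap$(the abelian part) — but in fact we only need the reverse: I claim $C_G(F)=F$ forces $\End_G(V)^\times$ to inject into $FC/C$. Indeed pick $\lambda\in\End_G(V)^\times$; then $\lambda$, viewed inside $\gl(V)$, centralizes $G$, so the subgroup $\langle G, \lambda\rangle\le\gl(V)$ has $\lambda$ in its center; pulling back, $\lambda$ corresponds to an element acting on $V$ commuting with all of $G$, and by the structure of $F$ (self-centralizing, its action on $V$ already exhausts the scalars because $V\le F$ is a chief factor and $F$ abelian-on-$V$ means $F$ maps onto a subgroup of $\gl_1(F)=F^\times$ which must be all of it lest $C_G(F)\supsetneq F$) we get $\lambda\in\mathrm{image}(F)$. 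Hence $\dim_F V=1$. The step I expect to be the genuine obstacle is pinning down precisely why $\End_G(V)$ cannot be strictly larger than the scalars supplied by $F$ — i.e. ruling out the ``field-automorphism'' contribution of $H$ to $\End_{\FF_p}(V)$: one must invoke that $V$ occurs as a chief factor of $G$ (so it is a minimal normal subgroup of some section, living in $F$) together with $C_G(F)=F$, which is exactly the leverage the hypothesis $\frat(G)=1$ buys us; without care this is where a gap would hide.
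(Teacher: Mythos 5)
Your reduction to $\frat(G)=1$ and the structural facts you collect ($\fit(G)$ abelian and complemented, $G'\le \fit(G)$, $C_G(\fit(G))=\fit(G)$) are fine, but your central step is false. Write $E=\End_G(V)$ and keep $\fit(G)$ for the Fitting subgroup, to undo the overloading of the letter $F$. A non-central complemented chief factor $V=A/B$ is $G$-isomorphic to one with $A\le\fit(G)$ (chief factors above $\fit(G)$ are central because $G'\le\fit(G)$, and for central factors the claim is trivial); since $\fit(G)$ is abelian it centralizes $A$ and hence acts trivially on $A/B$. Thus $\fit(G)\le C_G(V)=C$, the subgroup $\fit(G)C/C$ of $H=G/C$ is trivial, your Clifford-theoretic discussion of its homogeneous action is vacuous, and the image of $\fit(G)$ in $\gl(V)$ is $1$ --- not a group of scalars, and certainly not all of $E^\times$. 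The claim on which you base the conclusion, namely that $E^\times$ injects into $\fit(G)C/C$ (``the scalars are supplied by $\fit(G)$''), would force $E=\FF_2$ and is simply wrong; and even if some such containment held, it would not by itself yield $\dim_E V=1$: what one needs is that the image of $G$ in $\gl(V)$ is abelian (equivalently, that the enveloping algebra of $G$ on $V$ is commutative). You flag this yourself as ``the delicate bit'', and it is exactly where the argument breaks.

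Ironically, the observation that refutes your central step is what finishes the proof. From $G'\le\fit(G)\le C_G(V)$ you get that $H=G/C_G(V)$ is abelian and acts faithfully and irreducibly on $V$; then $V$ is absolutely irreducible as a module over $E$, and absolutely irreducible representations of abelian groups have degree one, so $\dim_E V=1$. This is essentially the paper's argument: with $\frat(G)=1$ one writes $G=M\rtimes H$ where $M=\fit(G)=V_1\times\cdots\times V_u$ is a product of irreducible modules for the abelian group $H\cong G/\fit(G)$, notes that each $V_i$ is absolutely irreducible over $\End_H(V_i)$ and hence one-dimensional, and that every nontrivial complemented chief factor is $G$-isomorphic to some $V_i$. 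Neither the equality $C_G(\fit(G))=\fit(G)$ nor any analysis of where the elements of $\End_G(V)$ ``come from'' is needed.
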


\begin{proof}
We may assume $\frat (G)=1.$ This means that $G=M\rtimes H$ where $H$ is abelian and $M=V_1\times \cdots \times V_u$ is the direct product
of $u$ irreducible non trivial $H$-modules $V_1,\dots,V_u.$ Let $F_i=\End_H(V_i)=\End_G(V_i)$: for each $i\in \{1,\dots,u\},$ $V_i$ is an absolutely irreducible $F_iH$-module
so $\dim_{F_i}V_i=1.$ Now assume that $A$ is a nontrivial irreducible $G$-module $G$-isomorphic to a complemented chief factor of $G$:
it must be $A\cong_G V_i$ for some $i$, so $|\End_G(A)|=|F_i|=|V_i|=|A|.$
\end{proof}

\begin{proof}[Proof of Corollary \ref{fittingamma}]
Let $G$ be a finitely generated prosolvable group with pronilpotent derived subgroup. By Lemma \ref{facile}, every non-Frattini chief factor of $G$ is  a 1-module, so, by Theorem \ref{thuno}, $G$ has the 2-intersection property. The conclusion follows from Proposition \ref{propo}.
\end{proof}

\section{Prosupersolvable groups}\label{super}

Recall that the Dirichlet Theorem on arithmetic progressions states that for any two positive coprime integers $a$ and $b$, there exist infinitely many primes which are congruent to $a$ modulo $b$. In particular, the arithmetic progression $\{1+r\cdot 2^n\mid r\in\N\}$
contains infinitely many primes. This implies that there exists a strictly ascending sequence $\{p_n\}_{n\in\N}$ of primes with the property that $2^n$ divides $p_n-1$ and $p_{n+1} > 2^n\cdot p_1\cdots p_n.$

Let $V_m$ be a 1-dimensional vector space over $\F_m$, where $\F_m$ is the field with $p_m$ elements, and $H_n:=\langle x_n\rangle$ be a cyclic group of order $2^n$ . We can define an action of $H_n$ on $V_m$, for every $m\le n$, as follows: if $v\in V_m$, then $v^{x_n}:=\zeta_mv$, where $\zeta_m$ is an element of order $2^m$ in $\F_m^*$.
Note that $C_{H_n}(V_m)=\langle x_n^{2^m}\rangle$. Consider the following finite supersolvable group:
$$G_n=(V_1\times\ldots\times V_n)\rtimes H_n.$$
Let us describe the maximal subgroups of $G_n$. Let $W=V_1\times\ldots\times V_n$ and for every $1\leq i \leq n,$ set $W_i:=V_1\times\ldots\times V_{i-1}\times V_{i+1}\times\ldots\times V_n.$
First note that $M=W\rtimes\langle x_n^2\rangle$ is the unique maximal subgroup of $G$ containing $W$. 
The other maximal subgroups of $G_n$ are the semidirect products $W_i\rtimes H_n^v$ for $i\in\{1,\dots,n\}$ and $v_i\in V$, so we have precisely
$p_i$ maximal subgroups of index $p_i$ for every $1\le i\le n$.
Consider now the maximal subgroups $M_1=W_i\rtimes H_n^{v_1}$ and $M_2:=W_i\rtimes H_n^{v_2}$ with $v_1,v_2\in V_i$. If $v_1\neq v_2$, then $M_1\neq M_2$ and, by  Lemma \ref{interKM},  $M_1\cap M_2=W_i\rtimes\langle x_n^{2^i}\rangle,$ since $\langle x_n^{2^i}\rangle=C_{H_n}(v)$ for every $0\neq v \in V_i.$ These considerations imply that a subgroup $H$ of $G_n$ can be expressed as intersection of maximal subgroups of $G$ if and only if it is conjugated to one of the following subgroups:
\begin{enumerate}
	\item $X_J=\left(\prod_{j\notin J}V_j\right) \rtimes H_n,$ where $J$ is a non-empty subset of $\{1,\dots,n\};$ note that $|G_n:X_J|=\prod_{j\in J}p_j.$
	\item $Y_J=\left(\prod_{j\notin J}V_j\right) \rtimes \langle x_n^2 \rangle,$  where $J$ is a non-empty subset of $\{1,\dots,n\};$ note that $|G_n:Y_J|=2\cdot \prod_{j\in J}p_j.$
	\item $Z_{J,i}=\left(\prod_{j\notin J}V_j)\right) \rtimes \langle x_n^{2^i}  \rangle,$ where $i\in \{2,\dots,n\}$ and $J$ is a subset of $\{1,\dots,n\}$ containing $i$. Note that $|G_n:Y_{Z,i}|=2^i\cdot \prod_{j\in J}p_j.$ 
\end{enumerate}

By \cite[Theorem 1.5]{AL}, if $K$ is a subgroup of $G$ with $\mu(K,G_n)\ne 0$, then there exists a family of maximal subgroups $M_1,\ldots,M_t$ of $G_n$ such that $K=M_1\cap \cdots\cap M_t$ and $|G_n:K|=|G_n:M_1|\cdots|G_n:M_t|$. The subgroups $Z_{J,i}$ do not have this property, hence $\mu(Z_{J,i},G_n)=0$ for all the possible choices of $J$ and $i.$ It follows that, for $n\geq 2$,
$$\begin{aligned}\tilde \gamma_{2^n\cdot p_1\cdots p_n}(G_n)&=2^n-1+2^n+(n-2)\cdot 2^{n-1}=2^{n-1}\cdot (n+2)-1,\\\tilde \beta_{2^n\cdot p_1\cdots p_n}(G_n)&\leq 2^{n+1}-1
\end{aligned}$$ and consequently
$$\frac{\tilde\beta_{2^n\cdot p_1\cdots p_n}(G_n)}{\tilde\gamma_{2^n\cdot p_1\cdots p_n}(G_n)}\leq \frac{4}{n+2}.$$ Now consider the inverse limit
$G=\varprojlim_nG_n.$ Note that $G$ is a 2-generated prosupersolvable group, with $G\cong (\prod_i V_i)\rtimes \mathbb Z_2,$ being $\mathbb Z_2$ the group of the 2-adic integer. Let $n^*=2^n\cdot p_1\cdots p_n.$ The condition
$p_{n+1} > 2^n\cdot p_1\cdots p_n$
for every $n\in \mathbb N,$ implies that if $H$ is an open subgroup of $G$ with $|G:H|>n^*,$ then $\left(\prod_{j>n}V_j\right)\rtimes \mathbb Z_2^{2^n}\leq H$ and consequently $\tilde\beta_{n^*}(G)=\tilde\beta_{n^*}(G_n)$ and
$\tilde\gamma_{n^*}(G)=\tilde\gamma_{n^*}(G_n).$ This implies
$$\liminf_{n\to \infty}\frac{\tilde \beta_n(G)}{\tilde \gamma_n(G)}\leq\liminf_{n\to \infty}\frac{\tilde\beta_{n^*}(G)}{\tilde\gamma_{n^*}(G)}
\leq \lim_{n\to \infty}\frac{4}{n+2}=0.
$$

\end{document}